\newtheorem{theorem}{Theorem}[section]
\newtheorem{conjecture}{Conjecture}
\newtheorem{corollary}{Corollary}[section]
\newtheorem{lemma}{Lemma}[section]
\newtheorem{proposition}{Proposition}[section]
\newtheorem{problem}{Problem}
\newtheorem{definition}{Definition}
\newcommand{\lose}{$rs \rightarrow ab $ in $\Delta$, namely $s \rightarrow b$ and $ a \notin N^{++}_D(s)$}
\newcommand{\nd}{$b\in N^{++}_D(f)$ }
\newcommand{\nnd}{$b\in N^{++}_{D'}(f)$ }
\newcommand{\ndn}{$b\in N^{++}_D(f)$}
\newcommand{\nndn}{$b\in N^{++}_{D'}(f)$}
\newcommand{\maxp}{There is a maximal directed path $P=m_0y_0 \rightarrow ..\rightarrow m_iy_i \rightarrow ...\rightarrow m_ky_k$ in $\Delta$ such that }
\newcommand{\same}{The same order $L$ is a local median order of the obtained tournament $T'$, $f$ is also  feed vertex of $L$ and thus $f$ has the SNP in $T'$}
\newcommand{\ra}{\rightarrow}
\date{}
\begin{document}
	
	\title{The Second Neighborhood Conjecture for Oriented Graphs Missing $\{C_{4}, \overline{C_{4}}, S_{3},$ chair and co-chair$\}$-Free Graph}

	\maketitle
\begin{center}\author{Darine AL MNINY \footnote[1]{KALMA Laboratory, Department of Mathematics, Faculty of Sciences I, Lebanese University, Beirut - Lebanon. (darinealmniny@gmail.com)}$^{,}$ \footnote[2]{Camille Jordan Institute, Claude Bernard University - Lyon 1, France. (mniny@math.univ-lyon1.fr)}, Salman GHAZAL \footnote[3]{ Department of Mathematics, Faculty of Sciences I, Lebanese University, Beirut - Lebanon. (salman.ghazal@ul.edu.lb)}$^{,}$ \footnote[4]{ Department of Mathematics and Physics, School of Arts and Sciences, Beirut International University, Beirut - Lebanon. (salman.ghazal@liu.edu.lb)}} \end{center}

	\begin{abstract}
	\noindent
	Seymour's Second Neighborhood Conjecture (SNC) asserts that every oriented graph has a vertex whose first out-neighborhood is at most as large as its second out-neighborhood. In this paper, we prove that if $G$ is a graph containing no induced  $C_4$, $\overline{C_4}$, $S_3$, chair and $\overline{chair}$, then every oriented graph missing $G$  satisfies this conjecture. As a consequence,  we deduce that the conjecture holds for every oriented graph missing  a threshold graph, a generalized comb or a star.
	
\end{abstract}

\section{Introduction}
\bigskip

Throughout this paper, all graphs are considered to be simple, that is, there are no loops and no multiple edges. Given a graph $G$, the vertex set and edge set of $G$ are  denoted by $V(G)$ and $E(G)$ respectively. Given an edge $xy$ of $G$,  the vertices $x$ and $y$ are called the endpoints of $xy$ and they are  said to be adjacent.  Two edges of  $G$ are said to be adjacent if they have a common endpoint. The neighborhood of a vertex $v$ in $G$, denoted by $N_G(v)$, is the set of all vertices adjacent to $v$. The  degree $d_G(v)$ of $v$ in $G$  is defined to be $d_G(v):=|N_G(v)|$. Note that we may omit the subscript if the graph is clear from the context. Given two sets of vertices $U$ and $W$ of  $G$, we denote by  $E[U, W]$ the set of all edges in  $G$ that joins a vertex in $U$ to a vertex in $W$. For $A \subseteq V(G)$, $G[A]$ denotes the subgraph of $G$ induced by $A$. If $G[A]$ is an empty graph, then $A$ is called a stable set, that is, there is no edge that joins any two distinct vertices of $A$.  However, if $G[A]$ is a complete graph, then $A$ is called a clique set, that is, any two distinct vertices of $A$ are adjacent.  The complement graph $\overline{G}$ of $G$ is  defined as follows: $V(\overline{G}) = V(G)$ and $xy\in E(\overline{G})$ if and only if $xy\notin E(G)$. A graph $H$ is called forbidden subgraph of  $G$ if $H$ is not (isomorphic to) an induced subgraph of $G$. In this case, we say that $G$ is $H$-free graph. \\

A digraph  is an orientation of a graph so that it contains neither loops nor parallel arcs. However, an oriented graph is a digraph without  digons (directed cycles of length 2).  Given a digraph $D$,  the vertex set and arc set of  $D$ are  denoted by $V(D)$ and $E(D)$ respectively. For  $(x,y)\in E(D)$ with $x, y\in V(D)$,  we say that $y$ is an out-neighbor of $x$, $x$ is an in-neighbor of $y$ and $x$ and $y$ are adjacent.  The (first) out-neighborhood (resp. in-neighborhood) $N^{+}_D(v)$ (resp. $N^{-}_D(v)$) of a vertex $v$ in $D$  is the set of all out-neighbors (resp. in-neighbors) of $v$. Moreover, the second out-neighborhood  $N^{++}_D(v)$  of $v$ in $D$ is the set of vertices that are at distance 2 from $v$, that is,  $N^{++}_D(v):=\{ x \in V(D)- N^{+}_D(v);  $ $ \exists$ $  y \in  N^{+}_D(v) |$
$ (y,x) \in  E(D)\}$. The out-degree, the in-degree and the second out-degree of $v$ in $D$ are defined as follows: $d^{+}_D(v):=|N^{+}_D(v)|$, $d^{-}_D(v):=|N^{-}_D(v)|$ and $d^{++}_D(v):=|N^{++}_D(v)|$, respectively. Note that  we omit the subscript if the digraph is clear from the context. For short, we write $x\rightarrow y$ if the arc $(x,y)\in E(D)$. Also, we write $x_1\ra x_2\ra ...\ra x_n$, if $x_i\ra x_{i+1}$ for every $1 \leqslant i \leqslant n-1$. \\

 Let $D$ be an oriented graph and let $v \in V(D)$, we say that $v$ has the second neighborhood property SNP if $d^{+}(v)\leq d^{++}(v)$. In 1990, P. Seymour  \cite{dean} conjectured the following:

 \begin{conjecture}
 	 Every oriented graph has a vertex satisfying the SNP.
 \end{conjecture}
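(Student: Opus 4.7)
The final statement is Seymour's Second Neighborhood Conjecture itself, which has been open since 1990; the paper at hand proves it only under strong structural restrictions on the graph of missing edges. My plan, then, is to outline the most natural line of attack in the spirit of this paper and be honest about where it breaks. Given an arbitrary oriented graph $D$, I would form the underlying tournament $T$ by orienting every non-arc of $D$ in some chosen direction, and record the set of missing edges as a graph $G$ on $V(D)$. By the Havet--Thomass\'e theorem on median orders, $T$ has a local median order $L=v_1v_2\cdots v_n$ whose feed vertex $f=v_n$ satisfies the SNP in $T$. The whole game is then to transfer this SNP from $T$ to $D$ by carefully choosing, in advance, the orientations used to complete $D$ to $T$.

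The plan would proceed by a double extremality: first pick the orientation of the missing arcs so as to maximize $d^{++}_T(f)-d^+_T(f)$ over all completions and all feed vertices of all local median orders, and then analyze how the deletion of the arcs of $G$ at $f$ and in $N^+_T(f)$ alters the two neighborhoods. Each missing arc at $f$ shrinks $d^+(f)$ by one, which helps, but may also shrink $d^{++}(f)$ by killing a length-two path $f\to y\to x$; the combinatorial problem is to show, using the feed-vertex exchange properties established by Havet--Thomass\'e, that one can always re-route or re-choose $f$ so that the gain in $d^+$ at least matches the loss in $d^{++}$. If the analysis at $f$ fails, I would look at the vertex immediately preceding it in $L$, or swap to a different median order, imitating the local-perturbation arguments that underlie the feed-vertex method.

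The main obstacle, and the reason the conjecture has resisted all such attempts, is precisely that nothing prevents $G$ from being globally pathological: the missing graph can contain arbitrarily large induced $C_4$'s, $\overline{C_4}$'s, chairs, stars, and so on, each of which lets us construct local configurations where every feed vertex of every median order loses a second out-neighbor but gains no compensating decrease in first out-neighbors. It is exactly the exclusion of these five small graphs in the main theorem of this paper that renders the feed-vertex bookkeeping tractable; without such a hypothesis, I do not see any mechanism, within the median-order framework or any other currently available technique, to close the gap. Accordingly, my honest assessment is that a proof of the statement as worded would require a genuinely new idea beyond what this paper, or the existing literature on the SNC, supplies.
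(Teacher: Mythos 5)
You have correctly identified that the statement is Seymour's Second Neighborhood Conjecture itself, which the paper states as an open conjecture and does not prove; the paper only establishes it for oriented graphs whose missing graph avoids $C_4$, $\overline{C_4}$, $S_3$, the chair and the co-chair. Your refusal to claim a proof is the right call, and your sketch of the median-order strategy --- complete $D$ to a tournament $T$, take a feed vertex $f$ of a local median order, and try to transfer the SNP from $T$ back to $D$ --- is an accurate description of the machinery the paper actually uses for its restricted cases, as well as an accurate diagnosis of why it fails in general: without structural control on the missing graph, the dependency digraph can contain configurations (as in Proposition 3.2 of the paper, where $\overline{C_4}$, the chair and the co-chair already force it outside $\vec{\mathcal{P}}$) that defeat the convenient-orientation bookkeeping. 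So there is nothing to compare against: the paper has no proof of this statement, and neither do you, and you are honest about it. The only small caveat is that your proposed ``double extremality'' (maximizing $d^{++}_T(f)-d^+_T(f)$ over completions) is not something the paper attempts, and there is no evidence in the literature that it can be made to work; but since you present it only as a heuristic that breaks down, this is not a gap in a claimed argument.
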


The above conjecture is called "The Second Neighborhood Conjecture", and it is abbreviated by  "SNC". The SNC on tournaments is called Dean's conjecture, where tournaments are orientations of complete graphs. In 1996, Fisher \cite{fisher} proved Dean's Conjecture. In 2000,
a shorter proof of Dean's conjecture was given by Havet and Thomass\'{e} \cite{m.o.} using a tool called the  median order. In 2007, Fidler and Yuster \cite{fidler} proved the SNC for tournaments missing a matching, using local median orders and dependency digraphs. In 2012, Ghazal  \cite{a} proved the weighted version of SNC for tournaments missing a generalized star. Then  in 2013  Ghazal \cite{contrib} proved the SNC for tournaments missing a comb, cycle of length 4 or 5. In 2015, Ghazal \cite{ghazal3} refined the result of \cite{fidler} and he showed in particular  that  every tournament missing a matching has  a certain "feed vertex" satisfying the SNP. \\

In this paper, we prove the SNC for any oriented graph  missing  a graph $G$, where  $G$   contains no  $C_4$, $\overline{C_4}$, $S_3$, chair and $\overline{chair}$ as induced subgraphs. This generalizes the results of \cite{a} and \cite{contrib}.

\section{Definitions and Preliminaries}
\bigskip

 A chair is a graph $G$  whose vertex set is  $V(G)= \{x,y,z,t,v\}$ and whose edge set is $E(G)= \{xy,yz,zt, zv\}$. The co-chair or $\overline{chair}$ is defined to be  the complement of a chair. We denote by $C_n=v_1v_2...v_nv_1$ the cycle on $n$ vertices, by $P_n=v_1v_2...v_n$ the path on $n$ vertices and  by $S_3$  the graph  on 6 vertices  indicated in Figure \ref{s1}.

\begin{figure}[h]
	\centering
	\begin{minipage}[b]{0.3\textwidth}
		\includegraphics[width=\textwidth]{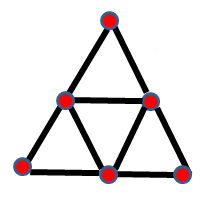}
		\caption{$S_3$}
		\label{s1}
	\end{minipage}
     \hfill
\begin{minipage}[b]{0.3\textwidth}
	\includegraphics[width=\textwidth]{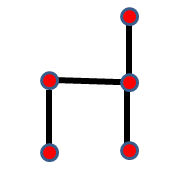}
	\caption{Chair}
\end{minipage}
	\hfill
	\begin{minipage}[b]{0.3\textwidth}
		\includegraphics[width=\textwidth]{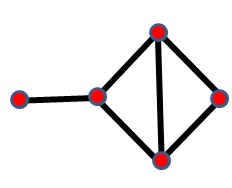}
		\caption{Co-chair}
	\end{minipage}

\end{figure}

A graph $G$ is a called a split graph if its vertex set is the disjoint union of a stable set $S$ and a clique set $K$. In this case, we write $G$ is an $\{S$, $K\}$-split graph. For an $\{S$, $K\}$-split graph $G$, if $sx\in E(G)$  $\forall$ $ s\in S$ and $\forall x\in K$, then $G$ is called a complete split graph. Otherwise if $E[S,K]$ forms a perfect matching of $G$, then $G$ is called a perfect split graph. \\

In \cite{thresholdch} and \cite{threshold}, the notion of a threshold graph is introduced as follows:

\begin{definition}
	A graph $G$ is called a threshold graph if:
	\begin{description}
		\item[1) ] $V(G)=\displaystyle\bigcup_{i=1}^{n+1}(X_i\cup A_{i-1})$, where the $A_i$'s and the $X_i$'s are pairwisely disjoint sets.
		\item[2) ] $K:=\displaystyle\bigcup_{i=1}^{n+1}X_i$ is a clique and the $X_i$'s are nonempty, except possibly $X_{n+1}$.
		\item[3) ] $S:=\displaystyle\bigcup_{i=0}^{n}A_{i}$ is a stable set and the $A_i$'s are nonempty, except possibly $A_0$.
		\item[4) ] $\forall$ $ 1\leq j\leq i\leq n$, $G[A_i\cup X_j]$ is a complete split graph.
		\item[5)] The only edges of $G$ are the edges of the subgraphs mentioned above.
	\end{description}
In this case, $G$ is called an $\{S,$ $K\}$-threshold graph.\\
\end{definition}

On the structure of threshold graphs, Hammer and Chv\`{a}tal noticed the following:

\begin{theorem}(Hammer and Chv\`{a}tal \cite{thresholdch}, \cite{threshold})\label{thre}
$G$ is a threshold graph if and only if $C_4$, $\overline{C}_4$ and $P_4$ are forbidden subgraphs of $G$.\\
\end{theorem}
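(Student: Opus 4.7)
The plan is to prove both implications separately, with the converse being the substantive direction. For the forward direction, assume $G$ is an $\{S,K\}$-threshold graph and consider any four vertices; I would analyze cases based on how they split between $K$ and $S$, using the fact that the neighborhoods in $K$ of the stable-set vertices are nested, since a vertex in $A_i$ is adjacent to exactly $X_1\cup\cdots\cup X_i$. For $\overline{C_4}$, viewed as a pair of disjoint edges, every edge of $G$ meets $K$, and two disjoint edges would place two distinct vertices in $K$, forcing an adjacency between them that contradicts the independence of the two edges. The same clique-chord phenomenon rules out induced $C_4$. For a would-be induced $P_4$ with consecutive vertices $a,b,c,d$, either the middle edge $bc$ lies in $K$, in which case the nested neighborhoods of the outer vertices $a,d\in S$ force contradictory inequalities on the layer indices of $b$ and $c$, or $b,c$ split across $K$ and $S$, which forces $a$ or $d$ into $K$ and creates a forbidden adjacency.

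For the converse, suppose $G$ is $\{C_4,\overline{C_4},P_4\}$-free and proceed by induction on $|V(G)|$. The central lemma is that $G$ has either an isolated vertex or a dominating vertex. To prove it, I would invoke the cograph fact that every $P_4$-free graph on at least two vertices has $G$ disconnected or $\overline{G}$ disconnected. If $G$ is disconnected and two distinct components each contained an edge $ab$ and $cd$, then $\{a,b,c,d\}$ would induce $\overline{C_4}$, contradicting the hypothesis; hence at most one component carries any edge, the remaining components are singletons, and an isolated vertex exists. The dual argument applied to $\overline{G}$, using that an induced $\overline{C_4}$ in $\overline{G}$ corresponds to an induced $C_4$ in $G$, produces a dominating vertex whenever $G$ is connected but $\overline{G}$ is not.

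Once this lemma is in hand, the induction removes a suitable vertex $v$ from $G$, applies the inductive hypothesis to $G-v$ to obtain a layered decomposition $V(G-v)=\bigcup_{i=1}^{n+1}(X_i\cup A_{i-1})$, and then reinserts $v$: a dominating $v$ is placed in the top clique layer $X_{n+1}$ (or used to start a new one), while an isolated $v$ is placed in the lowest stable layer $A_0$. I expect the main obstacle to lie in this bookkeeping step, namely verifying that the extended partition still satisfies the defining conditions in all boundary cases (empty $A_0$ or $X_{n+1}$, very small $G-v$, or when the reinserted vertex forces an index shift), rather than in the structural lemma itself, whose proof from the forbidden-subgraph hypothesis is a short direct argument.
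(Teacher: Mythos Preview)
The paper does not actually prove this theorem: it is quoted as a classical result of Hammer and Chv\`{a}tal, with a citation but no argument, and is used only as a black box (for instance in Proposition~\ref{gs in gc}). So there is no ``paper's own proof'' to compare against.

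Your proposal is nonetheless a correct and standard proof of the characterisation. The forward direction is fine: the observation that every edge meets the clique $K$ immediately kills an induced $\overline{C_4}=2K_2$ and an induced $C_4$, and your layer-index argument for $P_4$ is the usual one (note you should also explicitly dispose of the case $b,c\in S$, which is immediate since $S$ is stable). For the converse, your key lemma that a $\{C_4,\overline{C_4},P_4\}$-free graph has an isolated or a dominating vertex is exactly the right engine; the cograph dichotomy plus the $2K_2$-free hypothesis gives it in two lines, and the class is self-complementary so the dual case is free. The inductive reinsertion is routine; the only care needed is the one you flag, namely that when adding a dominating vertex one either shifts all indices up by one (if $A'_0\neq\emptyset$) or simply absorbs $v$ into $X'_1$ (if $A'_0=\emptyset$), and dually for an isolated vertex. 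None of this is a genuine obstacle.
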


As a generalization of threshold graphs, Ghazal introduced the notion of  generalized combs and he  studied their structures and properties as follows:

\begin{definition}(Ghazal \cite{combch}) \label{gcdef}
	A graph $G$ is called a generalized comb if:
	\begin{description}
		\item[1)] $V(G)$ is disjoint union of sets $A_0,...,A_n, M_1,...,M_l,X_1,....,X_{n+1}, Y_2,...,Y_{l+2}$ with $Y_1=X_1$. These sets are called the sets of the generalized  comb $G$.
		\item[2)] $S:=A\cup M$ is a stable set, where $M=\displaystyle\bigcup_{i=1}^{l}M_{i}$ and $A=\displaystyle\bigcup_{i=0}^{n}A_{i}$.
		\item[3)] $K:=X\cup Y$ is a clique, where $X=\displaystyle\bigcup_{i=1}^{n+1}X_{i}$ and $Y=\displaystyle\bigcup_{i=1}^{l+2}Y_{i}$.
		\item[4)] $\forall$ $ 1\leq j\leq i\leq n$, $G[A_i\cup X_j]$ is a complete split graph.
		\item[5)]$G[A\cup Y]$ is a complete split graph.
		\item[6)]$\forall$ $ 1\leq i\leq l$, $G[Y_i\cup M_i]$ is a perfect split graph or $M_i=\phi$.
		\item[7)] $\forall$ $ 1\leq i <j \leq l+1$, $G[Y_j\cup M_i]$ is a complete split graph.
		\item[8)] $X_{n+1}, Y_{l+2}, Y_{l+1}$ and $A_0$ are the only possibly empty sets among the $X_i's$, $Y_i's$ and $A_i's$.
		\item[9)] The only edges of $G$ are the edges of the subgraphs mentioned above.
	\end{description}
In this case, we say that $G$ is an $\{S,$ $K\}$-generalized comb.
\end{definition}

\begin{theorem}(Ghazal \cite{combch}) \label{generalisedcom}
$G$ is a generalized comb if and only if $C_4$, $\overline{C}_4$, $C_5$, $S_3$, chair and co-chair are forbidden subgraphs of $G$.
\end{theorem}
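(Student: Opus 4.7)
My plan is to establish the two implications separately. For the "only if" direction (if $G$ is a generalized comb then the six graphs are forbidden), I would exploit that $K=X\cup Y$ is a clique and $S=A\cup M$ is a stable set, so $G$ is automatically a split graph; a short count of how many vertices of each of $C_4$, $\overline{C_4}$, $C_5$ can be placed in $S$ versus $K$ rules these three out for every split graph. The remaining three cases ($S_3$, chair, co-chair) require a finite case analysis: place each vertex of a putative induced copy into one of the blocks $A_i$, $M_i$, $X_j$, $Y_j$ and use the complete-split and perfect-split conditions of items (4)--(7) of Definition \ref{gcdef} to derive a contradiction in each placement.

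For the "if" direction, I would proceed in three stages. First, by the F\"oldes--Hammer theorem, a $\{C_4,\overline{C_4},C_5\}$-free graph is split, so I can write $V(G)=S\sqcup K$ with $S$ stable and $K$ a clique. Second, I identify the comb's teeth: a vertex $m\in S$ belongs to the tooth part $M$ if it fails to be adjacent to some vertex of $K$ having the maximal stable neighborhood (the intended "backbone"). Using the forbidden chair, the tooth pairs $(m,y)$ partition into layers $M_i\cup Y_i$ such that $G[M_i\cup Y_i]$ is a perfect split graph (item~(6)); the key claim is that the binary relation "$(m,y)\sim(m',y')$ iff $my'\in E$" is symmetric and transitive on tooth pairs, the symmetry being forced by the chair exclusion applied to $\{m,y,y',m'\}$.

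Third, I would linearly order the layers so that item~(7) holds. The order should come from the value $|N(y)\cap(S\setminus M)|$ for $y\in Y_i$, with co-chair exclusion guaranteeing consistency within each layer and chair exclusion propagating it across layers. With the teeth assembled, I set $A:=S\setminus M$ and study $G[A\cup(K\setminus\bigcup Y_i)]$; this subgraph is $\{C_4,\overline{C_4},P_4\}$-free (the first two are inherited, and any induced $P_4$ extends via a tooth pair to a chair or a co-chair), so Theorem~\ref{thre} applies and $G[A\cup X]$ is a threshold graph, producing the stratification $A_0,\dots,A_n,X_1,\dots,X_{n+1}$ satisfying (4). Items~(5) and~(8) then reduce to a short case check: a non-edge $ay$ with $a\in A$, $y\in Y_i$, together with the matched tooth $m\in M_i$ and a suitable $K$-neighbor of $a$, would form an induced co-chair.

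The main obstacle I anticipate is the second stage: simultaneously defining the layer partition $\{M_i\cup Y_i\}$ and the linear order on it so that items~(6) and~(7) hold together. These two conditions describe a very rigid structure---a perfect matching inside each layer and a complete bipartite joining across layers---so deciding whether two tooth pairs belong to the same layer or to adjacent layers requires combined applications of the chair and co-chair exclusions to carefully chosen four- and five-vertex configurations. I expect most of the technical effort of the proof to live there, namely in showing that the within-layer equivalence and the between-layer order are each well-defined, and that they are mutually consistent with the remaining axioms of Definition~\ref{gcdef}.
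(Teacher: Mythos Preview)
The paper does not prove this theorem. Theorem~\ref{generalisedcom} is stated with the attribution ``(Ghazal \cite{combch})'' and no proof follows; it is imported as a black box from the companion paper \cite{combch} and used here only as input to Theorem~\ref{allowedC5}, Theorem~\ref{comb}, and Corollary~\ref{ourgraph}. Consequently there is no proof in the present paper against which your proposal can be compared.

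That said, your outline is a reasonable strategy and is presumably close in spirit to what \cite{combch} does. The ``only if'' direction is routine once one observes that a generalized comb is split (so $C_4$, $\overline{C_4}$, $C_5$ are excluded automatically by F\"oldes--Hammer), leaving a finite placement analysis for $S_3$, chair, and co-chair. For the ``if'' direction, your three-stage plan---split decomposition, isolation of the tooth pairs $(M_i,Y_i)$ via the perfect-matching structure, and then a threshold decomposition of the remainder via Theorem~\ref{thre}---is the natural architecture. Two places deserve care. First, your proposed criterion for membership in $M$ (``fails to be adjacent to some vertex of $K$ with maximal stable neighbourhood'') is ambiguous as stated; in a generalized comb the $S$-neighbourhoods of $K$-vertices are not totally ordered across the $X$- and $Y$-blocks, so you will need a sharper invariant (for instance, $m\in S$ lies in $M$ exactly when its $K$-neighbourhood is not an up-set in the neighbourhood lattice of $K$, equivalently when it is non-adjacent to some $K$-vertex whose $S$-neighbourhood strictly contains its own). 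Second, and as you already anticipate, the simultaneous extraction of the layer equivalence and the linear order on layers is where all the difficulty lives; you will likely need both chair and co-chair exclusions applied to overlapping five-vertex configurations, together with $S_3$-freeness, to rule out incomparable tooth layers.
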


\begin{corollary}(Ghazal \cite{combch})
Every threshold graph is a generalized comb.
\end{corollary}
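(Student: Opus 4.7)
The plan is to combine Theorem \ref{thre} (the Hammer--Chv\`{a}tal characterization of threshold graphs) with Theorem \ref{generalisedcom} (the characterization of generalized combs). Let $G$ be a threshold graph. By Theorem \ref{thre}, $G$ has no induced $C_4$, $\overline{C_4}$ or $P_4$. By Theorem \ref{generalisedcom}, to conclude that $G$ is a generalized comb it suffices to show in addition that $G$ contains no induced $C_5$, $S_3$, chair or co-chair. Since $C_4$ and $\overline{C_4}$ appear in both forbidden lists, no work is needed for them; the task reduces to upgrading the exclusion of $P_4$ to the exclusion of these four remaining graphs.

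The natural way to do this is to exhibit $P_4$ as an induced subgraph of each of $C_5$, chair, co-chair and $S_3$. Then any induced copy of one of these in $G$ would supply an induced $P_4$ in $G$, contradicting Theorem \ref{thre}. Concretely, I would proceed as follows. For $C_5 = v_1 v_2 v_3 v_4 v_5 v_1$, deleting any single vertex leaves an induced $P_4$. For the chair, by the definition $V = \{x,y,z,t,v\}$ with $E = \{xy, yz, zt, zv\}$, so $\{x, y, z, t\}$ induces the path $x y z t$, which is a $P_4$. For the co-chair, observe that $P_4$ is self-complementary, i.e.\ $\overline{P_4} = P_4$; hence the complement of the chair still contains $P_4$ as an induced subgraph. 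For $S_3$, I would inspect Figure \ref{s1} and select four vertices inducing a $P_4$.

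The main (very minor) obstacle is the verification for $S_3$, which depends on the explicit drawing in Figure \ref{s1}; the other three cases are either immediate from the definition or follow from the self-complementarity of $P_4$. Once $P_4 \subseteq_{\mathrm{ind}} S_3$ is confirmed, the argument closes: $G$ being $P_4$-free forces $G$ to be $\{C_5, S_3, \text{chair}, \overline{\text{chair}}\}$-free, and combined with the $C_4$- and $\overline{C_4}$-freeness coming from the threshold assumption, Theorem \ref{generalisedcom} yields that $G$ is a generalized comb.
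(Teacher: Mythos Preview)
Your argument is correct, and the route via the two forbidden-subgraph characterizations (Theorems~\ref{thre} and~\ref{generalisedcom}) is the natural one. Note, however, that the paper does not actually supply a proof of this corollary: it is merely quoted from \cite{combch}, so there is no ``paper's own proof'' to compare against here. Your observation that $P_4$ sits inside $C_5$, chair and co-chair (the last via self-complementarity of $P_4$) is exactly right; for $S_3$ you could equally well exhibit an induced $C_4$ or $\overline{C_4}$ rather than a $P_4$, since all three are forbidden in a threshold graph --- so you are not obliged to find specifically a $P_4$ in $S_3$. That small flexibility aside, nothing is missing.
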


\begin{proposition} \label{gs in gc}
	Let $G$ be a generalized comb defined as in Definition \ref{gcdef}. Then $G'=G-\bigcup_{1\leq i\leq l}E[Y_i, M_i]$ is a threshold graph.
\end{proposition}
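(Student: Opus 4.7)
The plan is to invoke Theorem \ref{thre} of Hammer and Chv\`{a}tal, so that it suffices to show that $G'$ contains no induced $C_{4}$, $\overline{C}_{4}$, or $P_{4}$. Since the only edges removed from $G$ lie between $Y\subseteq K$ and $M\subseteq S$, the partition $(S,K)$ of Definition \ref{gcdef} still makes $G'$ a split graph ($S$ stays stable, and $K$ stays a clique because no edge inside $K$ is deleted). A classical result of F\"{o}ldes and Hammer says that every split graph is automatically $\{C_{4},\overline{C}_{4}\}$-free, so the entire problem reduces to proving that $G'$ has no induced $P_{4}$.

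Next I would exploit the split structure to restrict the shape of a hypothetical induced $P_{4}$: a short case analysis on the placement of the four vertices in $(S,K)$ forces the pattern $s \,\text{--}\, k_{1} \,\text{--}\, k_{2} \,\text{--}\, s'$ with $s,s'\in S$, $k_{1},k_{2}\in K$, $s\sim k_{1}$, $s'\sim k_{2}$, $s\not\sim k_{2}$, $s'\not\sim k_{1}$. Equivalently, such a $P_{4}$ exists if and only if two stable vertices have \emph{incomparable} neighborhoods in $K$. To rule this out I would compute these neighborhoods explicitly. Using conditions~(4) and~(5) of Definition~\ref{gcdef}, every $a\in A_{p}$ satisfies
\[
N_{G'}(a)\cap K \;=\; Y \,\cup\, X_{1}\cup X_{2}\cup\dots\cup X_{p},
\]
where the $X$-part contributes only $X_{1}=Y_{1}\in Y$ when $p\le 1$ because $A$ is completely joined to $Y$. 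For $m\in M_{k}$, conditions~(6) and~(7) give $N_{G}(m)\cap K=\{y(m)\}\cup Y_{k+1}\cup\dots\cup Y_{l+1}$, where $y(m)\in Y_{k}$ is the matching partner of $m$; deleting $E[Y_{k},M_{k}]$ strips the single vertex $y(m)$ and leaves
\[
N_{G'}(m)\cap K \;=\; Y_{k+1}\cup Y_{k+2}\cup\dots\cup Y_{l+1}.
\]
A direct inspection now shows that these neighborhoods form a chain under inclusion: $N_{G'}(A_{p})\subseteq N_{G'}(A_{p'})$ whenever $p\le p'$; $N_{G'}(M_{k'})\subseteq N_{G'}(M_{k})$ whenever $k\le k'$; and, crucially, $N_{G'}(M_{k})\subseteq N_{G'}(A_{p})$ for every $k$ and $p$, because $Y_{k+1}\cup\dots\cup Y_{l+1}\subseteq Y$. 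No two stable vertices therefore have incomparable neighborhoods in $K$, so no induced $P_{4}$ can exist in $G'$, and $G'$ is threshold by Theorem~\ref{thre}.

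The main obstacle is the computation of $N_{G'}(m)$ for $m\in M_{k}$ and the recognition that it is precisely a suffix of the $Y$-chain. In $G$ itself, the extra matching vertex $y(m)\in Y_{k}$ would in general make $N_{G}(m)\cap K$ incomparable with $N_{G}(a)\cap K$ (whenever $Y_{k}\not\subseteq N_{G}(a)$), and this is exactly the source of the induced $P_{4}$'s that a generalized comb can carry. Removing precisely the perfect matching edges of condition~(6) trims off these obstructing vertices, collapses every stable-vertex neighborhood into the single nested chain above, and is therefore exactly what promotes the generalized comb $G$ to the threshold graph $G'$.
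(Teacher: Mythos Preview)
Your proof is correct and takes the same approach as the paper: both invoke Theorem~\ref{thre} (the Hammer--Chv\`{a}tal characterization), reducing the claim to the absence of induced $C_4$, $\overline{C}_4$, and $P_4$ in $G'$. The paper simply declares this ``clear'' and stops, whereas you actually carry out the verification---observing that $G'$ remains split (hence $\{C_4,\overline{C}_4\}$-free) and then computing the $K$-neighborhoods of the stable vertices to show they form a chain; the argument is sound.
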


\begin{proof}
	
	It is clear that $G'$ contains no induced $C_4$, $\overline{C}_4$ or $P_4$. Consequently, Theorem \ref{thre} implies that $G'$ is a threshold graph.  \end{proof}

On the structure of graphs containing no 	$C_4$, $\overline{C_4}$, $S_3$, chair and co-chair as induced subgraphs, Ghazal remarked the following:

\begin{theorem}(Ghazal \cite{combch}) \label{allowedC5}
	$C_4$, $\overline{C_4}$, $S_3$, chair and co-chair are forbidden subgraphs of a graph $G$ if and only if $V(G)$ is disjoint union of three sets $S$, $K$ and $C$ such that:
	
	\begin{description}
		\item[1)] $G[S\cup K]$ is an $\{S, K\}$-generalized comb.
		\item[2)] $G[C]$ is empty or isomorphic to the cycle $C_5$.
		\item[3)] Every vertex in $C$ is adjacent to every vertex in $K$ but to no vertex in $S$.
	\end{description}
	
\end{theorem}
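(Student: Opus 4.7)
I will prove both directions of the claimed equivalence. For the easy ($\Leftarrow$) direction, I assume $V(G)=S\sqcup K\sqcup C$ meets the three stated conditions. Since $G[S\cup K]$ is a generalized comb, Theorem \ref{generalisedcom} already forbids the five listed graphs inside $S\cup K$, so it remains to handle induced copies that mix vertices from $C$ and from $S\cup K$. Because every $C$-vertex is complete to $K$ and anticomplete to $S$, and $G[C]$ is either empty or $C_5$, a brief case analysis on how many vertices of a putative induced $C_4$, $\overline{C_4}$, chair, co-chair or $S_3$ lie in $C$ eliminates every remaining possibility.

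The substantial content lies in the ($\Rightarrow$) direction. Assume $G$ is $\{C_4,\overline{C_4},S_3,\text{chair},\overline{\text{chair}}\}$-free. If $G$ also contains no induced $C_5$, then Theorem \ref{generalisedcom} makes $G$ a generalized comb and I take $C=\emptyset$. Otherwise, I fix an induced $C_5$ on vertices $c_1c_2c_3c_4c_5c_1$ and set $C=\{c_1,\ldots,c_5\}$. The first key step is the dichotomy that every $v\in V(G)\setminus C$ is complete or anticomplete to $C$. I establish this by ruling out each intermediate value of $k=|N(v)\cap C|$: for $k=1$ (say $v\sim c_1$) the set $\{v,c_1,c_2,c_3,c_5\}$ induces a chair; for $k=2$ with adjacent neighbours $v\sim c_1,c_2$, the set $\{v,c_1,c_3,c_4\}$ induces $\overline{C_4}$; for $k=2$ with non-adjacent neighbours $v\sim c_1,c_3$, the set $\{v,c_1,c_2,c_3\}$ induces $C_4$; the two shapes of $k=3$ (three consecutive versus two consecutive plus one separated) yield $\overline{C_4}$ and $C_4$ respectively; and $k=4$ again yields $C_4$.

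Let $S$ and $K$ denote the vertices of $V(G)\setminus C$ that are anticomplete and complete to $C$, respectively. Two further short checks show that $K$ is a clique (otherwise non-adjacent $u,v\in K$ together with $c_1,c_3$ would induce $C_4$) and that $S$ is a stable set (otherwise adjacent $u,v\in S$ together with $c_1,c_2$ would induce $\overline{C_4}$). I then verify that $G[S\cup K]$ has no induced $C_5$: such a cycle could contain at most two vertices of the clique $K$ (since $C_5$ is triangle-free) and hence at least three vertices of the stable set $S$, contradicting $\alpha(C_5)=2$. Consequently $G[S\cup K]$ is $\{C_4,\overline{C_4},C_5,S_3,\text{chair},\overline{\text{chair}}\}$-free and Theorem \ref{generalisedcom} presents it as a generalized comb. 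The remaining and most delicate point is to ensure that the comb structure can be arranged so that its stable and clique parts are exactly the $S$ and $K$ just defined, rather than some other valid stable/clique split. I plan to handle this by instead applying Theorem \ref{generalisedcom} to the auxiliary graph $G-\{c_2,c_3,c_4,c_5\}$ --- which is still $C_5$-free, since any induced $C_5$ through $c_1$ would force two neighbours of $c_1$ (all lying in $K$) to be non-adjacent --- and using $c_1$, being complete to $K$ and anticomplete to $S$, as a marker to pin down the split. Verifying that removing $c_1$ from the resulting clique part yields an $\{S,K\}$-generalized comb meeting all nine conditions of Definition \ref{gcdef} is the main technical obstacle of the proof.
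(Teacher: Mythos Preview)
The paper does not contain a proof of this theorem: it is quoted verbatim from \cite{combch} and used as a black box (in particular, the equivalence ii)$\Leftrightarrow$iii) in Corollary~\ref{ourgraph} is obtained by citing Theorem~\ref{allowedC5}, not by reproving it). There is therefore nothing in the present paper to compare your argument against.

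That said, your outline is sound and hits the natural milestones. The dichotomy argument for $|N(v)\cap C|\in\{1,2,3,4\}$ is correct in each case; the checks that $K$ is a clique, $S$ is stable, and $G[S\cup K]$ is $C_5$-free are clean and correct. You are also right to flag as the ``main technical obstacle'' the alignment problem: Theorem~\ref{generalisedcom} only guarantees \emph{some} $\{S',K'\}$-generalized-comb structure on $G[S\cup K]$, whereas condition~3) forces the stable and clique parts to coincide with your specific $S$ and $K$. Your marker trick via $c_1$ is a reasonable way to attack this, but note that you still need an argument that in any generalized-comb decomposition of $G-\{c_2,c_3,c_4,c_5\}$ the vertex $c_1$ necessarily lands in the clique part, and that deleting one clique vertex from a generalized comb again yields a generalized comb with the remaining clique and the same stable set; neither of these is entirely automatic from Definition~\ref{gcdef}. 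An alternative route is to argue directly that any split partition of $G[S\cup K]$ differs from $(S,K)$ only by moving vertices that are isolated in $G[S\cup K]$ into $A_0$ or vertices universal in $G[S\cup K]$ into $Y_{l+2}$ (or $X_{n+1}$), adjustments that the generalized-comb definition explicitly accommodates.
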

From now on, if $G=C_5$, we set $G= xyzuvx$. If $G$ is an $\{S, K\}$-generalized comb, we follow the same notations as in Definition \ref{gcdef}. Moreover, if $G$ is a  $\{C_4$, $\overline{C_4}$, $S_3$, chair and co-chair$\}$-free graph, we use the notations in Theorem \ref{allowedC5}. Note that if $G$  is defined as in Theorem \ref{allowedC5} and $G[C]$ is empty, then $G$ is a generalized comb.
\section{Characterization Using Dependency Digraphs}
\bigskip
Let $D$ be an oriented graph. For two vertices $x$ and $y$ of $D$, we say that $xy$ is a missing edge of $D$ if $(x,y) \notin E(D)$ and $(y,x)\notin E(D)$. A vertex $v$ of $D$ is called a whole vertex if it is not incident to any missing edge, i.e., $N^{+}(v)\cup N^{-}(v)=V(D)-\{v\}$. Otherwise, we say that $v$ is a non-whole vertex. The missing graph $G$ of $D$ is defined to be the graph formed by the missing edges of $D$, formally, $G$ is the graph whose edge set  is the set of all the missing edges of $D$ and whose vertex set is the set of the non-whole vertices. In this case, we say that $D$ is missing $G$. Given two missing edges $x_1y_1$ and $x_2y_2$, we say that $x_1y_1$ loses to $x_2y_2$ if: $x_1\rightarrow x_2$ and $y_2\notin N^{+}(x_1)\cup N^{++}(x_1)$, $y_1\rightarrow y_2$ and $x_2\notin N^{+}(y_1)\cup N^{++}(y_1)$.\\

 The dependency digraph $\Delta_{D}$ (or simply $\Delta$) of $D$  is defined to be the digraph whose  vertex set consists of all the missing edges of $D$, and whose arc set contains the arc $(ab,cd)$ if and only if the missing edge $ab$ loses to the missing edge $cd$. Note that $\Delta$ may contain digons. These digraphs were used in \cite{fidler, contrib} to prove SNC for some oriented graphs.\\

\begin{definition} (Ghazal \cite{a})	A missing edge $ab$ is called  good  if one of the following holds:
	
	\begin{description}
		\item[(i)] $(\forall v \in V\backslash\{a,b\})[(v\rightarrow a)\Rightarrow(b\in N^{+}(v)\cup N^{++}(v))];$
		\item[(ii)]  $(\forall v \in V\backslash\{a,b\})[(v\rightarrow b)\Rightarrow(a\in N^{+}(v)\cup N^{++}(v))]$.
	\end{description}
If $ab$ satisfies $(i)$ we say that $(a,b)$ is a convenient orientation of $ab$. Else, we say that $(b,a)$ is a convenient orientation of $ab$.\\
\end{definition}

\begin{lemma} (Ghazal \cite{contrib})\label{goodmissinedgelemma}
	Let $D$ be an oriented graph and let $\Delta$ denote its dependency digraph. A missing edge $ab$ is good if and only if its in-degree in $\Delta$ is zero.
\end{lemma}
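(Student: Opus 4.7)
The plan is to prove both directions directly from the definitions, with the backward direction requiring a careful construction of a losing missing edge from the witnesses of failure of goodness.

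For the forward direction, I would assume $ab$ is good and show its in-degree in $\Delta$ is zero by contradiction. Suppose some $cd$ is an in-neighbor of $ab$ in $\Delta$, meaning $cd$ loses to $ab$. By definition of losing, $c \rightarrow a$ with $b \notin N^{+}(c) \cup N^{++}(c)$, and $d \rightarrow b$ with $a \notin N^{+}(d) \cup N^{++}(d)$. Since $ab$ is good, either condition (i) or (ii) of the definition holds. If (i) holds, then $c \rightarrow a$ forces $b \in N^{+}(c) \cup N^{++}(c)$, contradicting the first condition of losing. If (ii) holds, then $d \rightarrow b$ forces $a \in N^{+}(d) \cup N^{++}(d)$, contradicting the second condition.

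For the backward direction, I would argue by contrapositive: assuming $ab$ is not good, I want to produce a missing edge $cd$ that loses to $ab$, which forces the in-degree to be at least one. Since $ab$ is not good, both (i) and (ii) fail, so I can pick witnesses: a vertex $c \in V \setminus \{a,b\}$ with $c \rightarrow a$ and $b \notin N^{+}(c) \cup N^{++}(c)$, and a vertex $d \in V \setminus \{a,b\}$ with $d \rightarrow b$ and $a \notin N^{+}(d) \cup N^{++}(d)$. What remains is to verify that $cd$ is actually a missing edge (so $c \ne d$, both differ from $a,b$, and $c,d$ are non-adjacent in $D$); once this is checked, $cd$ loses to $ab$ by construction.

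The main obstacle, and the only real content of the proof, is this last verification. The distinctness checks $c,d \notin \{a,b\}$ follow immediately from $c \rightarrow a$, $d \rightarrow b$, and the fact that $ab$ itself is a missing edge (so neither $a \rightarrow b$ nor $b \rightarrow a$). If $c = d$, then $c \rightarrow b$ would place $b \in N^{+}(c)$, contradicting the choice of $c$. For non-adjacency: if $c \rightarrow d$, then combined with $d \rightarrow b$ we would have $b \in N^{++}(c)$, and symmetrically if $d \rightarrow c$, then $c \rightarrow a$ would give $a \in N^{++}(d)$. Each case contradicts one of the chosen witnesses, so $cd$ must indeed be a missing edge, completing the argument.
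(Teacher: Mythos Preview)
Your argument is correct. Both directions are handled properly: the forward direction is an immediate contradiction against conditions (i) and (ii), and in the backward direction you correctly verify that the two witnesses $c$ and $d$ must form a missing edge (distinct from $a,b$, distinct from each other, and non-adjacent), so that $cd$ loses to $ab$ in $\Delta$.

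There is nothing to compare against here: the paper states this lemma with attribution to \cite{contrib} and does not supply its own proof. Your write-up is the standard direct argument from the definitions and would be an appropriate proof to include.

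One cosmetic remark: in checking $c,d\notin\{a,b\}$ you appeal to $c\rightarrow a$, $d\rightarrow b$ and the fact that $ab$ is missing, which works, but you could also simply note that the failure of (i) and (ii) already produces $c,d\in V\setminus\{a,b\}$ by the form of the quantifier in the definition of goodness.
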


In \cite{a}, threshold graphs are characterized using dependency digraphs as follows:

\begin{theorem}(Ghazal \cite{a}) \label{threshold}
	Let $G$ be a graph. The following statements are equivalent:
	\begin{description}
		\item[i)] $G$ is a threshold graph;
		\item[ii)] Every missing edge of every oriented graph missing $G$ is good;
		\item[iii)] The dependency digraph of every oriented graph missing $G$ is empty.\\
	\end{description}
\end{theorem}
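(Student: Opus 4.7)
The plan is to exploit Lemma \ref{goodmissinedgelemma}, which says a missing edge is good iff its in-degree in $\Delta$ is $0$. Since the dependency digraph has no arcs precisely when every vertex has in-degree $0$, the equivalence $(ii) \Leftrightarrow (iii)$ is immediate. The substantive content is $(i) \Leftrightarrow (iii)$, which I would prove using Theorem \ref{thre} (threshold $\Leftrightarrow$ $\{C_4, \overline{C_4}, P_4\}$-free).

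For $(i) \Rightarrow (iii)$, I would argue by contradiction. Suppose some oriented graph $D$ missing a threshold graph $G$ carries an arc $ab \to cd$ in $\Delta_D$. Unpacking ``loses'': $a\ra c$ and $b\ra d$ (so $ac, bd \notin E(G)$), while $a\not\ra d$ and $b\not\ra c$. The four vertices are pairwise distinct, since any coincidence among them would produce either a loop or a $D$-arc on a missing edge. Now case-split on whether $ad \in E(G)$ and whether $bc \in E(G)$: when both lie outside $E(G)$, the induced subgraph $G[\{a,b,c,d\}]$ has edge set $\{ab, cd\}$ and is thus $\overline{C_4}$; when exactly one of them lies inside $E(G)$, we get a $P_4$; when both lie inside, we get a $C_4$. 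Each case contradicts Theorem \ref{thre}.

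For $(iii) \Rightarrow (i)$, I would argue the contrapositive: if $G$ is not threshold, Theorem \ref{thre} furnishes four vertices $a,b,c,d$ with $G[\{a,b,c,d\}] \in \{C_4, P_4, \overline{C_4}\}$, labeled so that $ab$ and $cd$ are $G$-edges. Build $D$ on $V(G)$ as follows. Inside $\{a,b,c,d\}$, orient $a\ra c$ and $b\ra d$ always, and additionally $d\ra a$ when $ad\notin E(G)$ and $c\ra b$ when $bc\notin E(G)$. For each vertex $v$ outside $\{a,b,c,d\}$ and each $u\in\{a,b\}$ with $uv\notin E(G)$, set $v\ra u$; for each $u\in\{c,d\}$ with $uv\notin E(G)$, set $u\ra v$. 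Orient the remaining non-missing edges (among outside vertices) arbitrarily. Then $N^+(a)=\{c\}$ and $N^+(b)=\{d\}$, and since $c\not\ra d$ and $d\not\ra c$, no length-$2$ directed path goes from $a$ to $d$ or from $b$ to $c$. Hence $ab$ loses to $cd$, so $\Delta_D$ is non-empty, contradicting $(iii)$.

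The main obstacle is this construction step: a careless orientation at outside vertices could manufacture a detour $a\ra v\ra d$ or $b\ra v\ra c$ that would sabotage the loss relation. The recipe above defuses this uniformly by forcing every outside arc incident to $a$ or $b$ to be incoming and every such arc incident to $c$ or $d$ to be outgoing, which blocks all potential detours at once; the minor technicality of possible isolated vertices of $G$ (which cannot appear in any missing graph) can be handled by simply discarding them from $V(D)$.
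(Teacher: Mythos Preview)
The paper does not supply its own proof of this theorem: it is quoted from Ghazal~\cite{a} and used later as a black box (notably in Case~$i$ of the main theorem). So there is no in-paper argument to compare against.

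That said, your argument is correct and is the natural one. The equivalence $(ii)\Leftrightarrow(iii)$ is immediate from Lemma~\ref{goodmissinedgelemma}. For $(i)\Rightarrow(iii)$, once you extract $ab,cd\in E(G)$ and $ac,bd\notin E(G)$ from a putative arc $ab\to cd$ of $\Delta$, the case split on $ad$ and $bc$ cleanly yields an induced $\overline{C_4}$, $P_4$, or $C_4$ on $\{a,b,c,d\}$, contradicting Theorem~\ref{thre}. For the converse, your construction is sound: by making every non-missing arc at $a$ or $b$ incoming and every non-missing arc at $c$ or $d$ outgoing, you force $N^+(a)=\{c\}$ and $N^+(b)=\{d\}$, and since $cd$ is missing this kills all length-two detours to $d$ from $a$ and to $c$ from $b$; hence $ab$ loses to $cd$. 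The required labeling of the induced $C_4/P_4/\overline{C_4}$ with $ab,cd\in E(G)$ and $ac,bd\notin E(G)$ is always achievable.

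One caveat on your isolated-vertex remark: by the paper's definition the missing graph of any $D$ has no isolated vertices, so if $G$ has one then \emph{no} oriented graph misses $G$, making $(ii)$ and $(iii)$ vacuously true even when $(i)$ fails. Discarding the isolated vertices does not produce a $D$ missing $G$ itself, so this is really a defect in the theorem statement rather than something your construction can repair; the result should be read for graphs without isolated vertices, where your proof goes through verbatim.
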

\begin{problem}
	Let $\vec{\mathcal{P}}$ be the family of all digraphs consisting of vertex disjoint directed paths and let $\mathcal{F}(\vec{\mathcal{P}})=\{G$ is a graph; $\forall$ $ D$ missing $G$, $\Delta_{D}\in \vec{\mathcal{P}}\}$ .  Characterize $\mathcal{F}(\vec{\mathcal{P}})$.
\end{problem}

\begin{proposition}\label{inducedsub}
	$G \in \mathcal{F}(\vec{\mathcal{P}})$ if and only if $G' \in \mathcal{F}(\vec{\mathcal{P}})$, for every $G'$ induced subgraph of $G$.
\end{proposition}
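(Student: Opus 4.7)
The plan is to establish the nontrivial direction: if $G\in\mathcal{F}(\vec{\mathcal{P}})$, then every induced subgraph $G'$ of $G$ also belongs to $\mathcal{F}(\vec{\mathcal{P}})$; the reverse implication is trivial on taking $G'=G$. The idea is that from any oriented graph $D'$ missing $G'$, one should build an oriented graph $D$ missing $G$ in such a way that $\Delta_{D'}$ coincides with the subdigraph of $\Delta_{D}$ induced on $E(G')$. Since removing vertices from a vertex-disjoint union of directed paths still leaves a vertex-disjoint union of directed paths, the hypothesis $\Delta_{D}\in\vec{\mathcal{P}}$ forces $\Delta_{D'}\in\vec{\mathcal{P}}$ at once.

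To construct $D$, I would set $W:=V(G)\setminus V(G')$, adjoin the elements of $W$ to $V(D')$ as fresh vertices, leave every edge of $G$ incident to $W$ as a missing edge, orient every non-missing pair $\{u,w\}$ with $u\in V(D')$ and $w\in W$ as $u\rightarrow w$, and orient the remaining non-missing pairs inside $W$ arbitrarily. It is routine to check that $D$ is an oriented graph whose set of missing edges is exactly $E(G)$ and whose set of non-whole vertices is exactly $V(G)$; in particular, the whole vertices of $D'$ remain whole in $D$ because they are not incident to any edge of $G$, so $D$ does miss $G$.

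The crux of the argument is the identity $\Delta_{D'}=\Delta_{D}[E(G')]$. For any $a,b,c,d\in V(D')$, the arcs $a\rightarrow c$ and $b\rightarrow d$ coincide in $D'$ and in $D$, and for $d\in V(D')$ one has $d\in N^{+}_{D}(a)$ if and only if $d\in N^{+}_{D'}(a)$. The only possible discrepancy would be a new length-two path $a\rightarrow w\rightarrow d$ with $w\in W$; however, the orientation rule forces every arc between $V(D')$ and $W$ to point into $W$, so $w\rightarrow d$ is never an arc and no such shortcut exists. The analogous observation rules out new contributions to $N^{++}_{D}(b)$ through $W$. Consequently $d\in N^{+}_{D}(a)\cup N^{++}_{D}(a)$ if and only if $d\in N^{+}_{D'}(a)\cup N^{++}_{D'}(a)$, and the symmetric statement holds with $b,c$ in place of $a,d$, so the losing relation between missing edges of $E(G')$ is identical in $D'$ and in $D$. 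This yields $\Delta_{D'}=\Delta_{D}[E(G')]$; the main delicate point, which the orientation rule is designed to handle, is precisely the avoidance of these length-two shortcuts through $W$ that could otherwise enlarge the second out-neighborhoods and destroy the correspondence.
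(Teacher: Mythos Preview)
Your argument is correct, but it differs from the paper's in a couple of notable ways.

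The paper removes a single vertex $v$ at a time and, when rebuilding $D$ from $D'$, adjoins not only $v$ but also two auxiliary whole vertices $\alpha,\beta$ with the arcs $(\alpha,v),(v,\beta),(\alpha,\beta)$, together with $(x,\alpha)$ and $(\beta,x)$ for every $x\in V(D')$, and $(x,v)$ whenever $xv\notin E(G)$. The point of $\alpha$ is that every $a\in V(D')$ reaches $v$ in two steps via $a\to\alpha\to v$; this forces each new missing edge incident to $v$ to be an \emph{isolated} vertex of $\Delta_D$, so one gets the stronger statement $\Delta_D=\Delta_{D'}\,\cup\,\{\text{isolated vertices}\}$, and the general case follows by induction on $|W|$.

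You instead adjoin all of $W=V(G)\setminus V(G')$ at once, orient every non-missing pair between $V(D')$ and $W$ into $W$, and only establish the weaker identity $\Delta_{D'}=\Delta_D[E(G')]$; the conclusion then rests on the (obvious) fact that $\vec{\mathcal{P}}$ is closed under induced subdigraphs. Your route is a bit more economical---no auxiliary vertices and no induction---precisely because it exploits this hereditary property of $\vec{\mathcal{P}}$, whereas the paper's construction avoids needing that property by making the new missing edges isolated in $\Delta_D$. Both approaches hinge on the same mechanism (controlling length-two paths through the newly added vertices so that second out-neighbourhoods restricted to $V(D')$ are unchanged), and either is a valid proof of the proposition.
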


\begin{proof}
	
	 \textit{Sufficient Condition.} Simply take $G'=G$.\\
	
	  \noindent\textit{Necessary Condition.} Assume first that $G'=G-v$ for some $v\in V(G)$. Let $D'$ be any oriented graph missing $G'$. Let $\alpha$ and $\beta$ be 2 distinct extra vertices neither in $D'$ nor in $G$.  Define $D$ as follows: The missing graph of $D$ is $G$, $V(D)=V(D')\cup \{v,\alpha,\beta\}$, that is, $D-\{v,\alpha,\beta\}=D'$. The arcs $(\alpha, v)$, $(v, \beta)$ and $(\alpha, \beta)$ are in $D$. For every $x\in V(D')$, if $xv\notin E(G)$, then $(x,v)\in E(D')$. Finally, for every $x\in V(D')$, the arcs $(x,\alpha)$ and $(\beta, x)$ are in $D$. Then the addition of $v, \alpha$ and $\beta$  to $D'$ in this way  neither affects the losing relations between the missing edges of $D'$ nor creates new ones. Hence, $\Delta_{D}$ is equal to $\Delta_{D'}$ plus isolated vertices (these isolated vertices are the edges of $G$ incident to $v$). Since $D$ is missing $G$, then $\Delta_{D}\in \vec{\mathcal{P}} $.  Whence,  $\Delta_{D'}\in \vec{\mathcal{P}}$. Thus $G'\in \mathcal{F}(\vec{\mathcal{P}})$. Now, the proof follows by induction on the number of vertices removed from $G$ to obtain the induced subgraph.

\end{proof}

It is  easy to observe the following:
\begin{proposition}\label{notinFP}
	$\overline{C}_4$, chair and co-chair are not in $\mathcal{F}(\vec{\mathcal{P}})$.
\end{proposition}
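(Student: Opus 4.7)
The plan is to exhibit, for each of $\overline{C}_4$, the chair, and the co-chair, an explicit oriented graph $D$ whose missing graph is the named graph and whose dependency digraph $\Delta_D$ either contains a directed cycle or has a vertex of out-degree at least two; in either case $\Delta_D\notin\vec{\mathcal{P}}$. An arc of $\Delta_D$ can only join two vertex-disjoint missing edges, and in all three graphs the disjoint pairs are few, so each verification reduces to inspecting the first and second out-neighborhoods of a handful of vertices.

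For $\overline{C}_4$, I would take $V(D)=\{a,b,c,d\}$ with missing edges $ab$ and $cd$ and orient the four remaining pairs as the directed $4$-cycle $a\to c\to b\to d\to a$. Using the pairing $x_1=a,\,y_1=b,\,x_2=c,\,y_2=d$ one checks that $ab$ loses to $cd$, and using $x_1=c,\,y_1=d,\,x_2=b,\,y_2=a$ one checks that $cd$ loses to $ab$; hence $\Delta_D$ contains a digon, which is a directed cycle.

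For the chair with $V=\{x,y,z,t,v\}$ and edges $\{xy,yz,zt,zv\}$, I would orient the six non-missing pairs by $x\to z$, $t\to x$, $v\to x$, $y\to t$, $y\to v$, $t\to v$. Then $N^+(x)\cup N^{++}(x)=\{z\}$ and $z\notin N^+(y)\cup N^{++}(y)$, so the missing edge $xy$ loses both to $zt$ (via $x_2=z,\,y_2=t$) and to $zv$ (via $x_2=z,\,y_2=v$); thus $xy$ has out-degree two in $\Delta_D$. For the co-chair, whose edges are the six non-chair pairs on $\{x,y,z,t,v\}$, I would orient the four chair-edge pairs as $x\to y$, $y\to z$, $z\to t$, $z\to v$; a short computation yields $N^+(x)\cup N^{++}(x)=\{y,z\}$ and $N^+(z)\cup N^{++}(z)=\{t,v\}$, so the missing edge $xz$ loses to both $yt$ and $yv$ and again has out-degree two.

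The main (and essentially only) obstacle is routine bookkeeping: one must verify that each claimed pairing is consistent (in particular, that no second-out-neighbor accidentally contains the intended witness) and that no alternative naming of the two missing edges produces a different losing relation that spoils the picture. Both checks are dispatched by the neighborhood computations sketched above.
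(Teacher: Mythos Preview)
Your proposal is correct and follows essentially the same approach as the paper: for each of the three graphs you exhibit an explicit small oriented graph whose dependency digraph contains either a digon (for $\overline{C}_4$) or a vertex of out-degree two (for the chair and co-chair), exactly as the paper does. The concrete orientations differ only in labeling; in particular your $4$-cycle $a\to c\to b\to d\to a$ for $\overline{C}_4$ is literally the paper's example, and your chair and co-chair constructions are relabelings of the paper's, yielding the same out-degree-two vertex in $\Delta_D$.
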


\begin{proof}
	Let $D$ be the oriented graph with vertex set $V(D)=\{a, b, c, d\}$ and arc set $E(D)=\{(a,c), (b,d),  (d,a), (c,b)\}$. Then $D$ is missing $\overline{C}_4$, $ab$ loses to $cd$ and $cd$ loses to $ba$. Thus $\Delta_{D} \notin \vec{\mathcal{P}}$.\\
	
	\noindent Let $D'$ be the oriented graph with  vertex set $V(D')=\{a, b, c, d, x\}$ and arc set $E(D')=\{(a,d), (b,c), (c,a), (b,x), (x,a), (x,c)\}$. Then $D'$ is missing a chair, $ab$ loses to both $dc$ and $dx$. Thus $\Delta_{D'}\notin \vec{\mathcal{P}}$.\\
	
	\noindent Let $D''$ be the oriented graph  with  vertex set $V(D'')=\{a, b, c, d, x\}$ and arc set $E(D'')=\{(a,c), (b,d), (d,a), (a,x)\}$. Then $D''$ is missing a co-chair, $ab$ loses to both $dc$ and $dx$. Thus $\Delta_{D''}\notin \vec{\mathcal{P}}$.\end{proof}

\begin{proposition}\label{c5}(Ghazal \cite{contrib})
	 $C_5 \in \mathcal{F}(\vec{\mathcal{P}})$.
\end{proposition}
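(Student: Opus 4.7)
The plan is to fix an arbitrary oriented graph $D$ whose missing graph is $C_{5}=xyzuvx$, so the missing edges (the vertices of $\Delta_{D}$) are $xy,\,yz,\,zu,\,uv,\,vx$, and to show that $\Delta_{D}$ has maximum in-degree and maximum out-degree at most $1$; this structural bound yields $\Delta_{D}\in\vec{\mathcal{P}}$. The whole argument will rely on the extreme rigidity of the $C_{5}$ structure: every adjacency in $\Delta_{D}$ is tightly constrained by which arcs between endpoints the missing edges of $D$ forbid.

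First I would observe that no arc of $\Delta_{D}$ can join two missing edges that share a vertex of the $C_{5}$. If $e_{i}$ and $e_{i+1}$ share a vertex $w$, any matching of their endpoints demanded by the losing condition either duplicates $w$ or forces an arc between the other two endpoints, and that second pair is itself joined in $G$ by one of the five missing edges, so supports no arc of $D$. Consequently the arcs of $\Delta_{D}$ are supported on the five pairs of non-adjacent edges of $C_{5}$; these pairs themselves form a $5$-cycle $xy-zu-vx-yz-uv-xy$. For each such non-adjacent pair, exactly one of the two possible endpoint matchings is admissible, because the "diagonal" matching requires an arc on one of the five missing edges and is therefore blocked. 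For instance, for the pair $\{xy,zu\}$ the only admissible matching is $(x,z),(y,u)$, since the alternative would demand an arc on the missing edge $yz$.

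The main step is then the degree bound. By rotational symmetry of the configuration it suffices to analyze the missing edge $xy$, whose only possible in-neighbors in $\Delta_{D}$ are $zu$ and $uv$. The admissible matching for $zu\to xy$ forces the arc $u\to y$, whereas the admissible matching for $uv\to xy$ forces the neighborhood requirement $y\notin N^{+}(u)\cup N^{++}(u)$. Since $u\to y$ places $y$ in $N^{+}(u)$, these two potential in-arcs cannot coexist, so the in-degree of $xy$ is at most $1$. An entirely symmetric clash between the forced arc $x\to u$ (coming from $xy\to uv$) and the required condition $u\notin N^{+}(x)\cup N^{++}(x)$ (coming from $xy\to zu$) bounds the out-degree of $xy$ by $1$. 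Rotating the labels around the $C_{5}$ gives the same bounds at every vertex of $\Delta_{D}$, completing the argument.

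The hard part I expect is precisely this "two-arc clash" step: one must pin down, for each non-adjacent pair, which single arc of $D$ is actually forced by the four losing conditions, and then pit that arc against the $N^{+}\cup N^{++}$ condition demanded by a competing potential arc at the same vertex. Once the uniqueness of admissible matchings from step~1 is in hand, the remainder is a routine rotation around the missing $C_{5}$, and whole vertices of $D$ outside $\{x,y,z,u,v\}$ play no role because the obstruction $y\in N^{+}(u)$ is already an arc of $D$, independent of any second-neighborhood contributions from outside the $C_{5}$.
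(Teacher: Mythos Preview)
Your degree bound is correct, but it does not prove the proposition. A digraph with maximum in--degree and out--degree at most $1$ is a disjoint union of directed paths \emph{and directed cycles}; you never rule out the cycles, and the sentence ``this structural bound yields $\Delta_{D}\in\vec{\mathcal{P}}$'' is simply false as written. This is not a cosmetic omission. Take $D$ on the five vertices $\{x,y,z,u,v\}$ with exactly the five arcs
\[
x\to z\to v\to y\to u\to x
\]
(a directed $5$--cycle on the diagonals of the missing $C_{5}=xyzuvx$). Using your own admissible matchings one checks directly that $uv\to xy$, $xy\to zu$, $zu\to vx$, $vx\to yz$, $yz\to uv$ all hold in $\Delta_{D}$: for instance, for $uv\to xy$ one has $u\to x$, $v\to y$, $N^{+}(u)\cup N^{++}(u)=\{x,z\}\not\ni y$ and $N^{+}(v)\cup N^{++}(v)=\{y,u\}\not\ni x$. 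Thus $\Delta_{D}$ is a directed $5$--cycle, which satisfies your degree bound but lies outside $\vec{\mathcal{P}}$. So the step you flag as ``routine rotation'' is exactly where the argument breaks.

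The paper does not argue via a degree bound at all; it asserts (with no details) that a case analysis of $\Delta_{D}$ yields one of six specific configurations, each a union of paths of length at most $3$. Your approach is therefore genuinely different, and the point of divergence is precisely the missing acyclicity argument. Note, however, that the five--vertex example above is not among the paper's six listed cases either, so before trying to repair your proof you should revisit whether the statement $C_{5}\in\mathcal{F}(\vec{\mathcal{P}})$ is actually true as formulated, or whether some extra hypothesis (or a different formulation of the target class) is implicitly being used.
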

\begin{proof}
 Let $D$ be an oriented graph missing $C_5$ and let $\Delta$ denote its dependency digraph. Then we may check by cases that one of the following occurs up to isomorphism:

\begin{description}
	\item[(i)] $\Delta$ has no arcs.
	\item[(ii)] $\Delta$ has exactly one arc, say $uv\rightarrow xy$.
	\item[(iii)] $\Delta$ has exactly two arcs, say $uv \rightarrow xy$ and $xv \rightarrow yz$.
	\item[(iv)] $\Delta$ has exactly two arcs, say $uv\rightarrow xy \rightarrow zu$.
	\item[(v)] $\Delta$ has exactly three arcs, say $uv \rightarrow xy \rightarrow zu \rightarrow vx$.
	\item[(vi)] $\Delta$ has exactly three arcs, say $uv \rightarrow xy \rightarrow zu$ and $xv \rightarrow zy$.
\end{description}
In all the cases above, $\Delta$ consists of vertex disjoint directed paths of lengths at most 3, that is,  $\Delta \in \vec{\mathcal{P}} $.  Thus the result holds.\end{proof}

\begin{theorem}\label{comb}
	 Let $G$ be a graph having no induced $C_4$, $C_5$, nor $S_3$. Then $G\in \mathcal{F}(\vec{\mathcal{P}})$ if and only if $G$ is a generalized comb.
\end{theorem}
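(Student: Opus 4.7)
The plan is to prove the two directions of the equivalence separately.

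For the forward direction $(\Rightarrow)$, I would combine the prior propositions with the structural characterization of generalized combs. Since $G \in \mathcal{F}(\vec{\mathcal{P}})$, Proposition \ref{inducedsub} implies every induced subgraph of $G$ lies in $\mathcal{F}(\vec{\mathcal{P}})$, and Proposition \ref{notinFP} then rules out induced copies of $\overline{C_4}$, chair, or co-chair in $G$. Combined with the hypothesis that $G$ is $\{C_4, C_5, S_3\}$-free, Theorem \ref{generalisedcom} identifies $G$ as a generalized comb.

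For the converse $(\Leftarrow)$, let $G$ be an $\{S,K\}$-generalized comb (with the notation of Definition \ref{gcdef}) and let $D$ be an oriented graph missing $G$. I must show $\Delta_D \in \vec{\mathcal{P}}$ via three successive reductions. First, for any potential losing arc $\alpha\beta \to \gamma\delta$ of $\Delta_D$, the split structure of $G$ forces each missing edge to have one endpoint in $S$ and one in $K$, and the fact that $K$ is a clique (no $D$-arcs inside) while $S$ is stable (always an arc between two $S$-vertices) reduces to $\alpha \in S$, $\beta \in K$, $\gamma \in K$, $\delta \in S$ up to symmetric swapping of the two edges. Chasing the adjacency conditions through the nested complete splits $G[A_i \cup X_j]$ ($j \le i$) and $G[A \cup Y]$ shows that $\alpha \in A$ produces an empty set of admissible $\beta$'s; symmetrically $\delta \in A$ is impossible; and since $G$ has no $M$-$X$ edges, $\beta, \gamma \in Y$. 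So every missing edge with an endpoint in $A \cup X$ is isolated in $\Delta_D$.

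Second, I would rule out the complete-bipartite $(M, Y)$-missing edges of $G$, namely $mv \in E[Y_j, M_i]$ with $i < j$. For such $mv$, the constraints $mv' \notin E(G)$ and $vm' \notin E(G)$ restrict a hypothetical winner to $v' \in Y_1 \cup \cdots \cup Y_{i-1} \cup (Y_i \setminus \{\mathrm{match}(m)\})$ and $m' \in M_{h'}$ with $h' \ge j > i$; then $m'v' \in E(G)$ requires $v' \in \{\mathrm{match}(m')\} \cup Y_{h'+1} \cup \cdots \cup Y_{l+1}$, an empty intersection. A symmetric argument bars in-arcs. The same index-chasing shows that any losing arc $mv \to m'v'$ among matching edges of $E[Y_h, M_h]$ must have $m' \in M_h \setminus \{m\}$ and $v' = \mathrm{match}(m')$, hence stays within the same matching layer. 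Consequently, $\Delta_D$ decomposes as $\bigsqcup_{h=1}^l \Delta_D^{(h)}$, with $\Delta_D^{(h)}$ supported on the matching edges of $E[Y_h, M_h]$.

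Third, each $\Delta_D^{(h)}$ is a subdigraph of the dependency digraph of the tournament $D[M_h \cup Y_h]$ missing the matching $E[Y_h, M_h]$: restricting $D$ to $M_h \cup Y_h$ can only shrink second out-neighborhoods, hence only enlarge the arc set of the dependency digraph, so $\Delta_D^{(h)} \subseteq \Delta_{D[M_h \cup Y_h]}$. By the Fidler--Yuster-style analysis of tournaments missing a matching (see \cite{fidler, ghazal3}), or a direct case analysis in the spirit of Proposition \ref{c5}, each such dependency digraph is a disjoint union of directed paths, so $\Delta_D^{(h)} \in \vec{\mathcal{P}}$ and hence $\Delta_D \in \vec{\mathcal{P}}$. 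The most delicate point is this third step: the second-neighborhood non-reach condition $v' \notin N^{++}_D(v)$ genuinely couples to vertices outside $M_h \cup Y_h$, and one needs the layer-by-layer reduction to truly preserve the disjoint-path structure, avoiding any spurious cycle created by the interplay between layers.
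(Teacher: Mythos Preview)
Your forward direction is identical to the paper's. For the converse, your first two reductions --- that arcs of $\Delta_D$ can only connect matching edges within a single layer $E[Y_h, M_h]$ --- supply in detail what the paper asserts in one sentence (``each possible losing relation can occur between two edges in $E[Y_t,M_t]$, for some $t$'').

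The divergence is your third step. The paper gives a short self-contained argument: first, $\Delta$ has maximum in- and out-degree $1$ (if $a_1x_1$ loses to both $a_2x_2$ and $a_3x_3$ in $E[Y_t,M_t]$ with $a_i\in M_t$, the non-missing edge $a_2x_3$ forces one of the two non-reach conditions to fail), so $\Delta$ consists of directed paths and cycles; second, a cycle $a_1b_1 \to \cdots \to a_nb_n \to a_1b_1$ is impossible, since the losing relations give $a_{i+1}\to a_i$ and $a_1\to a_n$ in $D$, and an induction along the $M_t$-coordinates then yields $a_i \to a_n$ for every $i<n$, contradicting $a_n\to a_{n-1}$.

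You instead embed $\Delta_D^{(h)}$ into $\Delta_{D[M_h\cup Y_h]}$ and cite \cite{fidler,ghazal3} for the structure of dependency digraphs of tournaments missing a matching. Your monotonicity observation (restriction shrinks $N^{++}$, hence enlarges the dependency digraph) is correct, and a subdigraph of something in $\vec{\mathcal{P}}$ is again in $\vec{\mathcal{P}}$, so the route is sound \emph{if} the matching case is genuinely available in the form you need. That is the gap: you defer the crux to a citation whose precise content you do not verify, and your closing sentence about ``spurious cycles created by the interplay between layers'' is misplaced --- once the containment $\Delta_D^{(h)}\subseteq \Delta_{D[M_h\cup Y_h]}$ is established there is no interplay left, only the matching case itself. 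The paper's two-paragraph direct argument handles exactly that case with no external input, and you should include it rather than outsource it.
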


\begin{proof}
\textit{Necessary Condition}. Since $G\in \mathcal{F}(\vec{\mathcal{P}})$, then Proposition \ref{inducedsub} together with Proposition \ref{notinFP} imply that $\overline{C}_4$, chair  and co-chair are not induced subgraphs of $G$. But  $C_4$, $C_5$, $S_3$ are not also induced subgraphs of $G$, thus due to Theorem \ref{generalisedcom}  $G$ is a generalized comb.\\	

\noindent \textit{Sufficient Condition}. Let $D$ be an oriented graph missing a generalized comb $G$ and let $\Delta$ denote its dependency digraph. Using the definition of $G$, each possible losing relation can occur between two edges in $E[Y_t, M_t]$, for some $t$. For $i=1,2,3$, suppose that $a_ix_i\in E[Y_t, M_t]$ with $a_i\in M_t$ and $x_i\in Y_t$. Assume that $a_1x_1$ loses to the two others. Then we have $a_1\rightarrow x_3$, $x_1\rightarrow a_2$, $a_2\notin N^{+}(a_1) \cup N^{++}(a_1)$
and $x_3\notin N^{+}(x_1) \cup N^{++}(x_1)$. By definition of $G$, the only edge of $G[Y_t\cup M_t]$ incident to $a_2$ is $a_2x_2$. Thus  $a_2x_3$ is not a missing edge and so either $a_2\rightarrow x_3$ or
$x_3 \ra a_2$. Whence, either $x_3\in N^{+}(x_1) \cup N^{++}(x_1) $ or $a_2\in N^{+}(a_1) \cup N^{++}(a_1) $, a contradiction. Thus the maximum out-degree in $\Delta$ is 1. Similarly, we can prove that the maximum in-degree in $\Delta$ is 1. This implies that $\Delta$ is composed of directed cycles and paths only.\\

Assume now that $\Delta$ contains a directed cycle $a_1b_1 \rightarrow...\rightarrow a_nb_n \rightarrow a_1b_1$, with $a_i \in M_t$ and $b_i \in Y_t$, for some $t$. Then by the losing relations we must have $a_{i+1} \rightarrow a_i $ $ \forall$ $ i<n$ and $a_1 \rightarrow a_n$ in $D$. We will show now by induction on $i$ that $\forall$ $ 1\leq i<n $, $a_i \rightarrow a_n$. It is true for $i=1$. Assume it is true for $i-1$. Then $a_{i-1} \rightarrow a_n$. Since $a_{i-1}b_{i-1}$ loses to $a_ib_i$, then $a_i\notin N^{++}(a_{i-1})$. But $a_ia_n$ is not a missing edge of $D$, then we must have $a_i \rightarrow a_n$, since otherwise $a_{i-1} \rightarrow a_n \rightarrow a_i$ in $D$,  a contradiction. This proves  that $\forall$ $ 1\leq i<n $, $a_i \rightarrow a_n$. In particular,  $a_{n-1} \rightarrow a_n$, a contradiction. Thus $\Delta$ has no directed cycles. This shows that $G\in\mathcal{F}(\vec{\mathcal{P}})$.

\end{proof}

As a consequence, we deduce the following on the  characterization of the graphs of our interest:

\begin{corollary}\label{ourgraph}
Given a graph $G$, the following statements are equivalent:
\begin{description}
	\item[i)] $C_4$ and $S_3$ are forbidden subgraphs of $G$ and $G\in \mathcal{F}(\vec{\mathcal{P}})$;
	\item[ii)] $C_4$, $\overline{C_4}$, $S_3$, chair and co-chair are forbidden subgraphs of a graph $G$;
	\item[iii)] $V(G)$ is disjoint union of three sets $S$, $K$ and $C$ such that: \begin{description}
		\item[1)] $G[S\cup K]$ is an $\{S, K\}$-generalized comb;
		\item[2)] $G[C]$ is empty or isomorphic to the cycle $C_5$;
		\item[3)] every vertex in $C$ is adjacent to every vertex in $K$ but to no vertex in $S$.
	\end{description}
\end{description}

\end{corollary}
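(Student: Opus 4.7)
The plan is to observe that the equivalence (ii) $\iff$ (iii) is already supplied by Theorem \ref{allowedC5}, so the work reduces to establishing (i) $\iff$ (ii).

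The implication (i) $\Rightarrow$ (ii) is immediate from the tools already in place: by Proposition \ref{inducedsub}, $\mathcal{F}(\vec{\mathcal{P}})$ is closed under taking induced subgraphs, while Proposition \ref{notinFP} says that $\overline{C_4}$, chair, and co-chair do not belong to $\mathcal{F}(\vec{\mathcal{P}})$. Combined with the hypothesis that $C_4$ and $S_3$ are forbidden, this yields (ii).

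For the harder direction (ii) $\Rightarrow$ (i), let $D$ be any oriented graph missing $G$; the goal is to show $\Delta_D \in \vec{\mathcal{P}}$. Since (ii) implies (iii), we may write $V(G) = S \sqcup K \sqcup C$ with $G[S \cup K]$ a generalized comb and $G[C]$ either empty or isomorphic to $C_5$. If $G[C]$ is empty, then $G$ is a generalized comb and Theorem \ref{comb} finishes the job. So assume $G[C] = xyzuvx$. The missing edges of $D$ fall into three families: those inside $S \cup K$ (type (a)), those inside $C$ (type (b)), and those of the complete bipartite graph between $C$ and $K$ (type (c)). The restriction of $\Delta_D$ to type (a) is already controlled by the argument in the proof of Theorem \ref{comb}, and the restriction to type (b) is controlled by Proposition \ref{c5}.

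It then remains to (1) forbid losing arcs that cross between types, and (2) analyze the type (c) sub-digraph directly. For (1), given a hypothetical losing relation $\alpha\beta \to \gamma\delta$ with $\alpha\beta$ and $\gamma\delta$ of different types, I would enumerate the possible locations of $\alpha,\beta,\gamma,\delta$ among $S$, $K$, $C$ and use the adjacency rules of (iii) --- in particular, no $C$--$S$ edges of $G$ and all $C$--$K$ edges present in $G$ --- to convert the required arcs $\alpha \to \gamma$, $\beta \to \delta$ into a forced arc that contradicts one of the non-reachability conditions in the definition of losing. For (2), I would mimic the end of the proof of Theorem \ref{comb}: first argue that any two missing edges $b_1x_1,\ b_2x_2 \in E[C,K]$ forced to be involved in a losing relation leave almost no freedom, giving in-degree and out-degree bounded by $1$ in the type (c) sub-digraph; then rule out directed cycles by the same inductive trick (a supposed cycle forces an arc $b_{i-1} \to b_n$ or $x_{i-1} \to x_n$ that contradicts the last losing relation in the cycle). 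I expect the type (c) analysis to be the main technical obstacle, because the complete bipartite structure between $C$ and $K$ produces many candidate missing edges, and one must simultaneously exploit the clique structure of $K$ and the $C_5$ structure of $C$ (together with the $C_4$/chair/co-chair/$S_3$ exclusions) to rule out every bad configuration.
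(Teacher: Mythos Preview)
Your outline matches the paper's proof exactly on (ii) $\Leftrightarrow$ (iii) and on (i) $\Rightarrow$ (ii), and your decomposition of the missing edges into types (a), (b), (c) is the same one the paper uses. The difference is that you badly overestimate the work needed for the type~(c) edges and for the cross-type losing arcs.

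The paper dispatches both your step~(1) and your step~(2) with a single observation: every edge of $G$ outside $E(C)$ is incident to a vertex of $K$; moreover $K$ is a clique in $G$ and every $C$--$K$ pair is an edge of $G$. Hence any losing relation involving a type~(c) edge, or crossing between two different types, would require an arc of $D$ joining two vertices of $K$, or a vertex of $K$ to a vertex of $C$ --- but all such pairs are missing edges of $D$, so no such arc exists. Consequently the type~(c) edges are isolated vertices of $\Delta_D$, and $\Delta_D$ is the disjoint union of $\Delta$ restricted to $E(G[S\cup K])$ (handled by Theorem~\ref{comb}), $\Delta$ restricted to $E(C)$ (handled by Proposition~\ref{c5}), and isolated vertices. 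There is no need for your proposed enumeration in~(1), and your anticipated ``main technical obstacle'' in~(2) evaporates: the inductive cycle-elimination argument you borrowed from Theorem~\ref{comb} never gets off the ground because there are no arcs at all among type~(c) edges.
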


\begin{proof}
Due to Theorem \ref{allowedC5}, ii) and iii) are equivalent.  However, ii) follows from i) due to Proposition \ref{inducedsub}  and Proposition \ref{notinFP}. Now assume that iii) holds and let $D$ be an oriented graph missing $G$.    Observe that every edge in $E(G)-E(C)$ is incident to a vertex in $K$. This implies that there is no losing relation between an edge in $E(C)$ and an edge in  $E(G)-E(C)$, since otherwise there is an edge $ab$ with $a \in C$, $b \in K$ and  $ab \notin E(G)$. This contradicts  the fact that every vertex in $C$ is adjacent in $G$ to every vertex in $K$. In the same way, we can prove that there is no losing relations  between an edge in $E(G[S \cup K])$ and an edge in $E[K,C]$, or between two edges in $E[K,C]$. Thus the only  possible losing relations hold  either between two edges in  $G[S\cup K]$ or between two edges in $G[C]$.  However, $G[S\cup K]$ is a generalized comb, then by Theorem \ref{comb} it is in $\mathcal{F}(\vec{\mathcal{P}})$. Moreover, $G[C]$ is empty or isomorphic to the cycle $C_5$, whence by Proposition \ref{c5} it is in $\mathcal{F}(\vec{\mathcal{P}})$. Therefore, $G\in \mathcal{F}(\vec{\mathcal{P}})$ and so i) holds.
\end{proof}
 We will use the following lemma frequently in the proof of our main theorem:

 \begin{lemma}\label{snlemma}
 	Suppose that $rs$ loses to $ab$ and $f \rightarrow a$ in an oriented graph $D$, namely with $s \rightarrow b$. If $fs$ is not a missing edge of $D$, then $f \rightarrow s \rightarrow b$ in $D$ and thus $b\in N^+(f)\cup N^{++}(f)$.
 \end{lemma}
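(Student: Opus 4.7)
The plan is to unpack the definition of the losing relation between $rs$ and $ab$ to extract the constraint that $a \notin N^+(s) \cup N^{++}(s)$, and then to do a case analysis on the non-missing edge $fs$, ruling out one orientation by contradiction.

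First I would recall what ``$rs$ loses to $ab$'' gives us. By definition, this means $r \to a$, $b \notin N^+(r) \cup N^{++}(r)$, $s \to b$ (the ``namely'' in the statement highlights exactly this), and crucially $a \notin N^+(s) \cup N^{++}(s)$. Only the last of these four pieces of information will actually be used together with the extra hypotheses $f \to a$ and $fs \notin E(G)$.

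Next, since $fs$ is not a missing edge of $D$, the two vertices $f$ and $s$ are adjacent in $D$, so exactly one of $f \to s$ or $s \to f$ holds (oriented graphs have no digons). Suppose for contradiction the latter. Then $f \in N^+(s)$. Since $f \to a$, we see that $a \in N^+(s) \cup N^{++}(s)$: indeed, if $a \in N^+(s)$ we are done, otherwise $a \in V(D) \setminus N^+(s)$ with the witness $y := f \in N^+(s)$ satisfying $y \to a$, putting $a$ in $N^{++}(s)$. A minor caveat is the possibility $a = s$, but $a = s$ combined with $f \to a$ and $s \to f$ would force a digon $f \leftrightarrows s$, which is impossible. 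In every subcase we land in $a \in N^+(s) \cup N^{++}(s)$, contradicting the losing relation.

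Therefore $f \to s$ must hold. Combined with the already-established $s \to b$, this yields the directed path $f \to s \to b$, so $b \in N^+(f)$ or $b \in N^{++}(f)$ (the latter whenever $b \notin N^+(f)$, using $s$ as the intermediate vertex). In either case $b \in N^+(f) \cup N^{++}(f)$, completing the proof. There is no substantial obstacle here; the only care needed is to handle the definition of $N^{++}$ (which excludes $N^+$) and to notice the degenerate case $a = s$ is ruled out by the no-digon property of oriented graphs.
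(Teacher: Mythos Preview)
Your proof is correct and follows essentially the same approach as the paper's: both do a case split on the orientation of the non-missing edge $fs$, rule out $s\to f$ by observing that $s\to f\to a$ would place $a$ in $N^{+}(s)\cup N^{++}(s)$ in contradiction with the losing relation, and then conclude from $f\to s\to b$. Your version is slightly more careful in distinguishing $N^{+}(s)$ from $N^{++}(s)$ and in noting the degenerate case $a=s$, but the argument is the same.
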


 \begin{proof}
 	Since $fs$ is not a missing edge, then either $f \rightarrow s$ or $s \rightarrow f$ in $D$. If $s \rightarrow f$ in $D$, then $s \rightarrow f \rightarrow a$ in $D$ and thus $a\in N^{++}(s)$, which contradicts the fact that $rs$ loses to $ab$. Thus $f \rightarrow s$. Whence, the result follows.\end{proof}

\section{Main Theorem}
\bigskip

Let $L=v_1v_2...v_n$ be an ordering of the vertices of a digraph $D$.   An arc $(v_i,v_j)\in E(D)$ is called forward with respect to $L$ if $i<j$. Otherwise, it is called backward with respect to $L$. $L$ is called a local median order of $D$ if it maximizes the set of  forward arcs of $D$ w.r.t. $L$, that is, the set $\{(v_i,v_j)\in E(D);$ $i<j \}$. In this case, $L$ satisfies the feedback property: For all $1\leq i\leq j\leq n:$
$$ d^{+}_{]i,j]}(v_i)  \geq  d^{-}_{]i,j]}(v_i); $$
$$and$$
$$ d^{-}_{[i,j[}(v_j) \geq  d^{+}_{[i,j[}(v_j), $$
where $]i,j]:=D[\{v_{i+1}, v_{i+2},  ...,v_j\}]$ and  $[i,j[:=D[\{v_{i}, v_{i+1}, ...,v_{j-1}\}]$.\\

\noindent In this case, the last vertex $v_n$ is called a feed vertex.\\

 We will need the following proposition:

\begin{proposition}
	Suppose that $L=v_1v_2...v_n$ is a local median order of a digraph $D$ and $e=(v_j,v_i)\in E(D)$ with $i<j$.
	Then $L$ is a local median order of the digraph $D'$ obtained from $D$ by reversing the orientation
	of $e$.
\end{proposition}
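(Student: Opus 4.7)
The plan is to verify the feedback property for $D'$ directly on every interval, rather than invoking global maximization. Fix any $1 \le p \le q \le n$. I need to establish the two inequalities
\[
d^{+}_{D',\,]p,q]}(v_p) \ge d^{-}_{D',\,]p,q]}(v_p)
\qquad \text{and} \qquad
d^{-}_{D',\,[p,q[}(v_q) \ge d^{+}_{D',\,[p,q[}(v_q).
\]
The reversal swaps only the single arc $e=(v_j,v_i) \to (v_i,v_j)$, so the first step is to pin down exactly which of these four quantities are affected, interval by interval.

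For the first inequality, a change can only occur when $e$ is incident to $v_p$ on one side and to some $v_k \in \{v_{p+1},\ldots,v_q\}$ on the other. Since the endpoints of $e$ are $v_i, v_j$ with $i<j$, the only possibility is $p=i$ and $j\le q$ (the alternative $p=j$ would require $i>p=j$, contradicting $i<j$). In this unique affected case, reversal deletes the in-arc $(v_j,v_i)$ and installs the out-arc $(v_i,v_j)$, so $d^{+}_{D',\,]p,q]}(v_p) = d^{+}_{D,\,]p,q]}(v_p) + 1$ and $d^{-}_{D',\,]p,q]}(v_p) = d^{-}_{D,\,]p,q]}(v_p) - 1$. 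Hence the first feedback inequality for $D$ yields the first feedback inequality for $D'$ with slack $2$. In every unaffected interval both sides are unchanged, so the inequality carries over verbatim.

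The second inequality is handled symmetrically: the only affected configuration is $q=j$ and $i\ge p$, in which reversal gives $d^{+}_{D',\,[p,q[}(v_q) = d^{+}_{D,\,[p,q[}(v_q) - 1$ and $d^{-}_{D',\,[p,q[}(v_q) = d^{-}_{D,\,[p,q[}(v_q) + 1$, once more improving the inequality by $2$. In every other interval nothing changes. Therefore both feedback conditions hold for every $1\le p \le q\le n$ in $D'$, and $L$ is a local median order of $D'$.

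\textbf{Main obstacle.} The only real subtlety is the bookkeeping: correctly identifying, for each of the two feedback inequalities, the unique interval-configuration in which the single reversed arc changes a relevant degree, and verifying the direction of each change. Once that case analysis is done, the inequalities in $D'$ follow from those in $D$ with no arithmetic beyond $\pm 1$, so there is no deeper difficulty.
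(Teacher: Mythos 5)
Your argument is correct. Note that the paper states this proposition without any proof, so there is no official argument to compare against; judged on its own, your interval-by-interval bookkeeping is sound. For the first feedback inequality the reversed arc is relevant only when $p=i$ and $j\le q$, where it migrates from the in-count to the out-count of $v_p$; for the second, only when $q=j$ and $p\le i$, where it migrates from the out-count to the in-count of $v_q$; in both cases the slack in the inequality grows by $2$, and every other interval is untouched. One caveat: the paper's (loosely worded) definition of a local median order is an order that maximizes the number of forward arcs, with the feedback property presented as a consequence of that definition rather than as the definition itself. Read literally, verifying the feedback property for $D'$ does not by itself certify that $L$ maximizes forward arcs in $D'$. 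That residual gap is closed by a one-line global argument: reversing the single backward arc $e$ increases the forward-arc count of $L$ by exactly one, while it can increase the forward-arc count of any other order by at most one, so $L$ remains maximizing. Alternatively, since the feedback property is the standard definition of a local median order (Havet--Thomass\'e) and is the only property of $L$ invoked anywhere in this paper, your verification suffices for every use the authors make of the proposition.
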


We will use the following theorem:

\begin{theorem}(Havet et al. \cite{m.o.}) \label{feed}
	Every feed vertex of a tournament has the SNP.
\end{theorem}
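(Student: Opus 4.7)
The plan is to prove this theorem by induction on $n = |V(T)|$, following the classical argument of Havet and Thomass\'{e}. Let $L = v_1 v_2 \ldots v_n$ be a local median order of a tournament $T$ with feed vertex $f = v_n$, and write $A = N^+(f)$ and $B = N^-(f)$; since $T$ is a tournament, $V(T) \setminus \{f\} = A \sqcup B$, and the goal is the bound $|A| \le |N^{++}(f)|$. The cases $n \le 2$ and $A = \emptyset$ are trivial, so I assume $n \ge 3$ and $A \ne \emptyset$. A first use of the feedback inequality at the pair $(n-1, n)$ forces $v_{n-1} \to f$, hence $v_{n-1} \in B$; this starts the case analysis.

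I then split on whether $v_{n-1}$ is already a second out-neighbor of $f$. In the first case $v_{n-1} \in N^{++}(f)$, I delete $v_{n-1}$ and pass to $T' = T - v_{n-1}$ with the restricted order $L' = v_1 \ldots v_{n-2} v_n$; feedback inequalities are inherited by sub-sequences, so $L'$ is a local median order of $T'$ with feed vertex $f$. Since $v_{n-1} \in B$, a direct check gives $d^+_{T'}(f) = d^+_T(f)$ and $N^{++}_{T'}(f) = N^{++}_T(f) \setminus \{v_{n-1}\}$, so the induction hypothesis yields $d^+_T(f) \le d^{++}_T(f) - 1$, closing this case with room to spare. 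In the second case $v_{n-1} \notin N^{++}(f)$, no $a \in A$ satisfies $a \to v_{n-1}$, so the tournament structure gives $v_{n-1} \to a$ for every $a \in A$ and hence the crucial containment $A \cup \{f\} \subseteq N^+(v_{n-1})$. I then delete $f$ instead and work with $T'' = T - f$ and $L'' = v_1 \ldots v_{n-1}$, a local median order of $T''$ with feed vertex $v_{n-1}$; the induction hypothesis gives $|N^{++}_{T''}(v_{n-1})| \ge |N^+_{T''}(v_{n-1})| \ge |A|$.

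The main obstacle is transferring this inequality from $v_{n-1}$ in $T''$ back to $f$ in $T$: a second out-neighbor $u$ of $v_{n-1}$ in $T''$ is witnessed by some $w \in N^+_T(v_{n-1}) \setminus \{f\}$ with $w \to u$, and while one checks directly that $u \in B$ (if $u \in A$ then both $v_{n-1} \to u$ and $u \to v_{n-1}$, contradicting the tournament structure), the witness $w$ need not lie in $A$. My plan is to construct an injection from $N^{++}_{T''}(v_{n-1})$ into $N^{++}_T(f)$ as follows: when $w \in A$ one maps $u$ to $u$ itself; when $w \in B \setminus \{f\}$ one uses the feedback property on an appropriate interval to locate some $a \in A$ with $a \to u$, or else exhibit an alternative element of $N^{++}_T(f)$ to absorb the image. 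Where this bookkeeping fails, I would invoke the preceding Proposition to reverse a carefully chosen backward arc, iterating until the witnesses all sit inside $A$; this is the sedimentation idea of Havet and Thomass\'{e}. Once the injection is in place, the bound $|A| \le |N^{++}_T(f)|$ follows and $f$ satisfies the SNP.
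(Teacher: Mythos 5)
First, a remark on scope: the paper does not prove this statement at all --- it is imported as a black box from Havet and Thomass\'e \cite{m.o.} --- so there is no internal proof to compare against; your attempt has to stand on its own. It does not, and the first failure is concrete. In Case 1 you delete $v_{n-1}$ and assert that $L'=v_1\ldots v_{n-2}v_n$ is a local median order of $T'=T-v_{n-1}$ because ``feedback inequalities are inherited by sub-sequences.'' They are not: they are inherited by restrictions to \emph{intervals} of $L$ (so deleting $v_1$ or $v_n$ is safe), but $\{v_1,\ldots,v_{n-2},v_n\}$ is not an interval. Take $T$ to be the directed triangle $a\to b\to c\to a$ with the median order $L=abc$, so $f=c$, $v_{n-1}=b$, and $b\in N^{++}(c)$, which puts you in Case 1. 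Then $L'=ac$ is not a local median order of $T-b$ (its only arc $c\to a$ is backward and the feedback property fails), the feed vertex $c$ of $L'$ violates the SNP in $T-b$ (there $d^{+}(c)=1$ and $d^{++}(c)=0$), and your conclusion $d^{+}_{T}(f)\le d^{++}_{T}(f)-1$ reads $1\le 0$. So the induction hypothesis simply cannot be invoked for $L'$, and Case 1 as written derives a false statement.

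Case 2 is not a proof but a plan. The step you correctly identify as ``the main obstacle'' --- converting $|N^{++}_{T''}(v_{n-1})|\ge|A|$ into $|N^{++}_{T}(f)|\ge|A|$ --- is exactly the mathematical content of the theorem, and the naive containment $N^{++}_{T''}(v_{n-1})\subseteq N^{++}_{T}(f)$ genuinely fails: a vertex $u$ may be reachable from $v_{n-1}$ only through a witness $w\in B$ while $u$ dominates all of $A$, in which case $u$ is not a second out-neighbour of $f$. The proposed remedies (``feedback on an appropriate interval,'' ``an alternative element to absorb the image,'' iterated reversals of ``carefully chosen'' backward arcs) are never specified, so no injection is actually constructed. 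The Havet--Thomass\'e argument avoids both problems by operating at the ends of the order, where deletion is an interval restriction and where sedimentation (moving a ``bad'' in-neighbour that dominates $N^{+}(f)\cup\{f\}$) provably preserves the local median order; your version moves the surgery to $v_{n-1}$ and to a non-interval deletion, and neither case survives. As it stands the proposal does not establish the theorem.
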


Now we are ready to prove our main theorem:

\begin{theorem} Let $D$ be an oriented graph missing a 	$\{C_4$, $\overline{C_4}$, $S_3$, chair and co-chair$\}$-free graph $G$. Then $D$ satisfies the SNC.\end{theorem}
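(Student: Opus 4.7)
The plan is to complete $D$ to a tournament $T$ by orienting each missing edge and, after possibly reversing a few of them, produce a feed vertex $f$ of a local median order $L$ of $T$ that enjoys the SNP not only in $T$ (guaranteed by Theorem \ref{feed}) but also in the original oriented graph $D$. The structural decomposition from Corollary \ref{ourgraph}---$V(G)=S\cup K\cup C$ with $G[S\cup K]$ a generalized comb, $G[C]$ empty or a $C_5$, and $C$ adjacent in $G$ to every vertex of $K$ and to no vertex of $S$---is the essential tool, since it pins down the missing edges and, by the same corollary, forces $\Delta_D$ to be a disjoint union of directed paths.

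The first step is to fix the orientations of the missing edges. For each directed path component $e_1\to e_2\to\cdots\to e_k$ of $\Delta_D$, Lemma \ref{goodmissinedgelemma} tells us that the source $e_1$ is a good missing edge, and I would orient it according to its convenient orientation. The remaining $e_2,\ldots,e_k$ are oriented in the order dictated by the losing relations, matching the direction propagated from $e_1$ along the losing arcs. Isolated missing edges of $\Delta_D$ are oriented conveniently as well. Let $T$ be the resulting tournament, let $L=v_1\ldots v_n$ be a local median order of $T$, and let $f=v_n$ be its feed vertex. Because every arc at $f$ is backward in $L$, the proposition preceding Theorem \ref{feed} allows me to reverse each missing edge at $f$ currently oriented with tail $f$ without destroying the local median order property. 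After this adjustment, every missing edge at $f$ points into $f$, so $N^+_T(f)=N^+_D(f)$.

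The key step is then to establish $N^{++}_T(f)\subseteq N^{++}_D(f)$, which together with $d^+_T(f)\leq d^{++}_T(f)$ yields the SNP for $f$ in $D$. Pick $x\in N^{++}_T(f)$ and $y\in N^+_D(f)$ with $y\to x$ in $T$. If $y\to x$ lies in $D$ we are done, so assume $yx$ is a missing edge of $D$ oriented $(y,x)$ in $T$. Here Lemma \ref{snlemma} is the main tool: combined with the convenient orientation assigned to the source of the path of $\Delta_D$ that contains $yx$, and with the local structure of $G$ around $y$ (which either sits in the comb part or in $C$), one produces a vertex $s\in N^+_D(f)$ with $s\to x$ in $D$, witnessing $x\in N^{++}_D(f)$. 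Tracing along the directed path of $\Delta_D$ ending at $yx$ provides the string of auxiliary missing edges needed to invoke Lemma \ref{snlemma} repeatedly, and the convenient orientation of the source edge of that path anchors the induction.

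The delicate point is the case analysis behind this last claim. When $f$ is whole, or lies in $S$ or in $C$, the local structure of $G$ (from Definition \ref{gcdef} on the comb side and from the $C_5$ adjacency pattern on the $C$ side) readily ensures that the intermediate $fs$ is an edge of $D$, so that Lemma \ref{snlemma} applies. The main obstacle is the case $f\in K$, because then $f$ is incident to many missing edges---both inside the generalized comb and to every vertex of the $C_5$ component---and comb-side witnesses (obtained from the $Y_i,M_i,X_j,A_i$ block structure) must be combined with $C_5$-side witnesses (from the classification of losing configurations in the proof of Proposition \ref{c5}) while ensuring that distinct missing edges at $f$ receive distinct witnesses in $N^{++}_D(f)$. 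Balancing this bookkeeping, and verifying that no cross-interaction between the $K$--$C$ missing edges and the comb-internal missing edges creates an unaccountable vertex, is where I expect the bulk of the proof to live.
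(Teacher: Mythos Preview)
Your overall architecture is the right one and matches the paper: complete $D$ to a tournament by orienting the missing edges along the directed paths of $\Delta_D$ (starting from a convenient orientation of each source), take a local median order, and argue that the feed vertex $f$ has the SNP already in $D$. The structural input from Corollary~\ref{ourgraph} and the repeated use of Lemma~\ref{snlemma} are also exactly the tools the paper uses.

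The genuine gap is in the reversal step. You reverse \emph{every} missing edge incident to $f$ so that $N^+_{T'}(f)=N^+_D(f)$, and then assert $N^{++}_{T'}(f)\subseteq N^{++}_D(f)$. This inclusion fails. Take $f=m_0$ on a maximal $\Delta$-path $m_0y_0\to m_1y_1\to\cdots$ in some $E[Y_t,M_t]$, with $(m_0,y_0)$ convenient. The losing relation forces $m_0\to y_1$ and $y_0\to m_1$ in $D$, together with $m_1\notin N^+_D(m_0)\cup N^{++}_D(m_0)$. After you reverse $m_0y_0$ towards $f$, you still have $f\to y_1$ in $D$ and $(y_1,m_1)\in E(T')$, so $m_1\in N^{++}_{T'}(f)$; yet $m_1\notin N^{++}_D(f)$. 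Your proposed use of Lemma~\ref{snlemma} cannot rescue this: the predecessor of $y_1m_1$ in $\Delta$ is $m_0y_0$, and the relevant endpoint $s=y_0$ satisfies that $fs=m_0y_0$ \emph{is} a missing edge, so the hypothesis of the lemma is not met. More generally, whenever $f$ is an endpoint of a missing edge that itself loses to another edge, the ``$fs$ is not missing'' check breaks down.

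The paper avoids this by \emph{not} reversing the missing edges at $f$ whose out-degree in $\Delta$ is nonzero. Then $f$ may gain one extra out-neighbour (e.g.\ $y_i$ when $f=m_i$ with $i<k$), but one shows it gains exactly one compensating extra second out-neighbour (namely $m_{i+1}$), so the inequality transfers. Thus your ``$N^+_{T'}(f)=N^+_D(f)$ and $N^{++}_{T'}(f)\subseteq N^{++}_D(f)$'' scheme must be replaced by a finer bookkeeping: for each missing edge at $f$ that is kept as an out-arc, exhibit a matching new second out-neighbour, and for all other $f\to a\to b$ paths run the Lemma~\ref{snlemma} argument after first ruling out the possibility that $f$ itself is an endpoint of the predecessor edge $rs$. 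This is precisely where the long case analysis (over the position of $f$ in $S\cup K\cup C$ and over the shape of $\Delta[E(C)]$) becomes unavoidable.
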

\begin{proof}
Let $\Delta$ denote the dependency digraph of $D$, and let $\Delta[E(C)]$ denote the subdigraph of $ \Delta $ induced by the set of vertices that correspond to the edges pf $C$.  Corollary \ref{ourgraph} follows that $\Delta$ consists of disjoint directed paths only and each of its arcs occurs only between two edges in the same set $E[Y_{j},M_{j}]$ for some $j$ or between two edges of $C$. Let $P= m_{0}y_{0} \rightarrow ... \rightarrow  m_{i}y_{i} \rightarrow ... \rightarrow m_{k}y_{k}$ be a maximal directed path in $\Delta$, with $ m_{i} \in M_{j}$ and $ y_{i} \in Y_{j}$. Due to the maximality of $P$ and due to Lemma \ref{goodmissinedgelemma}, $m_{0}y_{0}$ is a good missing edge and so it has a convenient orientation. If $ (m_{0},y_{0})$ is a convenient orientation, we add the arcs $(m_{2i},y_{2i})$ and the arcs $(y_{2i+1}, m_{2i+1})$ to $D$. Else, we add the arcs $(y_{2i},  m_{2i})$ and the arcs  $(m_{2i+1}, y_{2i+1})$. We do this for every maximal directed path in $\Delta$ whose vertices are edges in  $E[Y_{j},M_{j}]$. We have many cases.\\

\noindent \textbf{\underline{Case $ i $:}} \textit{$ G[C]$ is empty or $ \Delta[E(C)] $ is empty.}\\

\noindent In this case, the obtained oriented graph $D'$ is missing $G'=G-\cup E[Y_j, M_j]$ which is a threshold graph by Proposition \ref{gs in gc}. We assign to every missing edge of $D'$ (which is good by Theorem \ref{threshold}) a convenient orientation and we add it to $D'$. The obtained oriented graph $T$ is a tournament. Let $L$ be a local median order of $T$ and let $f$ denote its feed vertex. Then by Theorem \ref{feed} $f$ has the SNP in $T$. Reorient all the missing edges incident to $f$ towards $f$ except those whose out-degree in $\Delta$ is not zero. \same . We will prove that $f$ has the SNP in $D$ also. For this aim, we consider many cases.\\

\textbf{\underline{Case $  1 $:}} \textit{$f$ is a whole vertex.} Clearly,  $f$ gains no new out-neighbor. We will prove that $f$ gains no new second out-neighbor. Assume that $f \rightarrow a \rightarrow b \rightarrow f$ in $T'$. Then $f \rightarrow a$ and $b \rightarrow f$ in $D$.  If $a \rightarrow b $ in $E(D)$ or $(a,b)$ is a convenient orientation w.r.t. $D$, then $b \in N^{++}_D(f)$. If $a \rightarrow b$ in $E(D')-E(D)$ and $(a,b)$ is not a convenient orientation w.r.t. $D$, then there is  \lose. But $f \rightarrow a$ in $D$ and $fs$ is a non-missing edge of $D$, then by Lemma \ref{snlemma} \ndn. If $a \rightarrow b$ in $E(T')-E(D')$, then $(a,b)$ is a convenient orientation w.r.t $D'$. Hence \nnd  and so there is $a'$ such that $f \rightarrow a' \rightarrow b$ in $D'$. Since $f \rightarrow a'$ in $D'$ and $f$ is whole vertex, then $f \rightarrow a'$ in $D$. But this is already treated above, thus \ndn.\\

\textbf{\underline{Case $ 2 $:}} \textit{$\exists$ $ 1\leq t\leq l$ such that $f\in M_t$. \maxp $f=m_i$}.\\

\textit{\underline{Case $  2.1 $:} Assume $(y_i,m_i)\in E(D')$}. Clearly, $f$ gains no new first out-neighbor. We claim that $f$ gains no new second out-neighbor. Assume $m_i\ra a\ra b\ra m_i$ in $T'$. Then $(m_i, a)\in E(D)$ and $(a,b)\in E(T)$.\\

\textit{Subcase $ a $}: If $(a,b) \in E(D)$, then clearly \ndn.\\

\textit{Subcase $ b $}: If $(a,b) \in E(D')-E(D)$, then either $(a,b)$ is a convenient orientation w.r.t. $D$ and hence \nd or there is \lose. There is $j$ such that $rs, ab \in E[Y_j, M_j]$. Assume $m_i=r$, then $y_i=s$, $a=y_{i+1}$ and $b=m_{i+1}$. Since $(y_i,m_i)\in E(D')$, then $(m_{i+1}, y_{i+1})\in E(D')$, that is, $(b,a)\in E(D')$, a contradiction. So $m_i\neq r$. Assume now that $s=m_i$. Then $a=m_{i+1}$. However $(m_{i+1},m_i)\in E(D)$, then $(a,m_i)\in E(D)$, a contradiction. So $s\neq m_i$. Now we prove that $m_is$ is not a missing edge of $D$. If $b \in Y_j$, then $s\in M_j$ and thus $m_is$ is not a missing edge. Else $ b \in M_j$, whence $a \in Y_j$ and $s\in Y_j$. Since $m_ia$ is not a missing edge, then by definition of $G$, $fs=m_is$ is also not missing edge. But $f\ra a$ in $E(D)$ and $a \notin N^{++}_D(s)$, then due to Lemma \ref{snlemma} we get $b \in N^{++}_D(f)$.\\

\textit{Subcase $ c $}: If $(a,b)\in E(T)-E(D')$. Then $(a,b)$ is a convenient orientation w.r.t $D'$. But $f\ra a$ in $E(D)$ and so in $E(D')$, then $b\in N^{++}_{D'}(f)$. But this is already treated above in Subcase $i.2.1.a $ and Subcase $i.2.1.b$.\\

\textit{\underline{Case $ 2.2 $:} Assume $(m_i,y_i)\in E(D')$}. Here there are two cases to be consider.\\

\textit{Case $ 2.2.1 $: Assume $i=k$, that is,  $f=m_k$}. Clearly, $f$ gains no new out-neighbor. We will prove that $f$ gains no new second out-neighbor. Suppose $f \ra a \ra b \ra f$ in $T'$. Then $(f,a)\in E(D)$ and $(a,b)\in E(T)$.\\

\textit{Subcase $ a $}: If $(a,b)\in E(D)$, then clearly \ndn.\\

\textit{Subcase $ b $}: If $(a,b)\in E(D')-E(D)$, then either $(a,b)$ is a convenient orientation w.r.t. $D$ and hence \nd or there is \lose . There is $j$ such that $rs, uv\in E[Y_j, M_j]$. Since $f=m_k$, we have $r\neq m_k$ and $s\neq m_k$. If $b \in Y_j$, then $s\in M_j$. Then $m_ks$ is not a missing edge. Else $b \in M_j$. Whence, $a \in Y_j$ and $s\in Y_j$. Since $m_ka$ is not a missing edge, then by definition of $G$, $fs=m_ks$ is also not missing edge. But $f\ra a$ in $D$ and $a \notin N^{++}_D(s)$,  then by Lemma \ref{snlemma} we get $b \in N^{++}_D(f)$.\\

\textit{Subcase $ c $}: If $(a,b)\in E(T)-E(D')$, then  $(a,b)$ is a convenient orientation w.r.t. $D'$ and so  \nndn . Then there is a vertex $a'$ such that $m_k\ra a'\ra b$ in $D'$. Since $(m_k,a)\in E(D)$, then $a \neq y_k$ and $\forall$ $ j>t, a \notin Y_j$.\\ Assume $a'=y_k$. Then $(y_k,b)\in E(D')$ and $b \neq m_k$. Thus $(y_k,b)\in E(D)$. This means that  $b \notin A\cup X\cup Y \cup C$. Then either $b$ is a whole vertex or $b\in M$. If $b$ is whole, then $ab$ is not a missing edge, a contradiction. So $b \in M$. Whence, $\exists$ $ \alpha$ such that $b \in M_{\alpha}$.  If $\alpha < t$, then by definition of $G$, $y_kb \in E(G)$, that is,  $y_kb$ is a missing edge, a contradiction. Thus $\alpha\geq t$. Since $b \in M_{\alpha}$ with $\alpha\geq t$ and  $ab$ is  a missing edge of $D'$, then by definition of $G$, $a \in Y_j$ for some $j > \alpha$. Thus $a \in Y_j$ for some $j > t$, a contradiction.\\ So $a' \neq y_k$. Then $(m_k, a') \in E(D)$. But this is treated in Subcase $i.2.2.1.a $ and Subcase $i.2.2.1.b$.\\

\textit{Case $ 2.2.2 $: Assume $i<k$}. Then $f$ gains only $y_i$ as an out-neighbor. We will prove that $f$ gains only $m_{i+1}$ as a second out-neighbor.\\

\textit{Subcase $ a $}: Suppose that $m_i\ra y_i\ra b$ in $T'$ such that $b \neq m_{i+1}$. Then $(y_i, b)\notin E(D')-E(D)$ and $(y_i,b)\in E(T)$.\\

\textit{Subcase $ a.1 $}: If $(y_i,b)\in E(D)$. Since $y_i \in Y_t$, then $y_{i+1} \in Y_t$. Since $y_ib$ is not a missing edge, then by definition of $G$, $y_{i+1}b$ is not a missing edge. Since $y_i\ra b$ in $E(D)$ and $y_{i+1}\notin N^{++}_D(y_i)$, then we must have $y_{i+1} \ra b$. Then $m_i\ra y_{i+1}\ra b$ in $D$.\\

\textit{Subcase $ a.2 $}: If $(y_i, b)\in E(T)-E(D')$. Then $(y_i,b)$ is a convenient orientation w.r.t. $D'$. Then $b \in N^{++}_{D'}(m_i)$, that is, there is vertex $a$ such that $m_i \ra a \ra b$ in $D'$.\\

\textit{Subcase $ a.2.1 $}: If  $a=y_i$, then by definition of $G$, $(y_i,b)\in E(D)$, a contradiction to the fact that $y_ib$ is a missing edge of $D$.\\

\textit{Subcase $  a.2.2 $}: If $a \neq y_i$, then $(m_i,a)$ in $E(D)$. If $(a,b)\in E(D)$, then clearly \ndn. Else $(a,b)\in E(D')- E(D)$, then either $(a,b)$ is a convenient orientation w.r.t. $D$ and hence \nd or there is \lose. Thus  $\exists$ $ j$ such that $rs, ab \in E[Y_j,M_j]$. Assume $r=m_i$. Then $y_i=s$, $a=y_{i+1}$ and $b=m_{i+1}$, a contradiction to the fact that  $b \neq m_{i+1}$. So $r \neq m_i$. Assume $s=m_i$. Then $a=m_{i+1}$. However $(m_{i+1},m_i)\in E(D)$, then $(a,m_i)\in E(D)$, a contradiction. So $s\neq m_i$. Now we will prove that $m_is$ is not a missing edge. If $b \in Y_j$, then $s\in M_j$ and thus $m_is$ is not a missing edge. Else $b \in M_j$. Whence, $a \in Y_j$ and $s\in Y_j$. Since $m_ia$ is not a missing edge, then by definition of $G$, $fs=m_is$ is also not missing edge. But $f\ra a$ in $D$ and $a \notin N^{++}_D(s)$, then by Lemma \ref{snlemma} we get $b \in N^{++}_D(f)$.\\

\textit{Subcase $ b $}: Suppose $m_i\ra a \ra b$ in $T'$ with $a \neq y_i$ and $b \neq m_{i+1}$. Then $(m_i,a)\in E(D)$ and $(a,b)\in E(T)$.\\

\textit{Subcase $  b.1 $}: If $(a,b)\in E(D')$. This is the same as Subcase $i.2.2.2.a.2.2$.\\

\textit{Subcase $ b.2 $}: If $(a,b) \in E(T)-E(D')$, then $(a,b)$ is a convenient orientation w.r.t. $D'$ and thus \nndn. This means that there is a vertex $a'$ such that $m_i\ra a'\ra b$ in $D'$.  If  $a'=y_i$, then  $(y_i,b)\in E(D)$. This case is already treated in Subcase $i.2.2.2.a.1$. Else, we proceed in the same method  as in Subcase $i.2.2.2.a.2.2$.\\

\textbf{\underline{Case $ 3 $:}} \textit{$\exists$ $ 1\leq t\leq l$ such that $f\in Y_t$ and $M_t\neq \phi$. \maxp $f=y_i$.}\\

\textit{\underline{Case $ 3.1 $:} Assume $(m_i,y_i)\in E(D')$}. Clearly, $f$ gains no new first out-neighbor. We prove that $f$ gains no new second out-neighbor. Assume $y_i\ra a\ra b\ra y_i$ in $T'$. Then $(y_i, a)\in E(D)$ and $(a,b)\in E(T)$.\\
	
	\textit{Subcase $  a $}: If $(a,b)\in E(D)$, then clearly \ndn.\\
	
	\textit{Subcase $ b $:} If $(a,b)\in E(D')-E(D)$, then either $(a,b)$ is a convenient orientation w.r.t. $D$ and hence \nd or there is \lose. Thus $\exists$ $ j$ such that $rs, ab\in E[Y_j,M_j]$. Assume $r=y_i$. Then $s=m_i$, $a=m_{i+1}$ and $b=y_{i+1}$. Since $(m_i,y_i)\in E(D')$, then $(y_{i+1}, m_{i+1})\in E(D')$, that is, $(b,a)\in E(D')$, a contradiction. Then $r\neq y_i$. Assume $s=y_i$. Then $a=y_{i+1}$. Hence $y_ia=y_iy_{i+1}$ is a missing edge, contradiction. So $s\neq y_i$. Now we prove that $y_is$ is not a missing edge. If $a \in Y_j$, then $y_ia$ is a missing edge, a contradiction. So  $a \in M_j$ and hence $s \in M_j$. Since $y_ia$ is not a missing edge, then by definition of $G$, $y_is$ is also not a missing edge. Since $y_i\ra a$ in $D$ and $a\notin N^{++}(s)$, then by Lemma \ref{snlemma} we get $b \in N^{++}_D(f)$.\\
	
	\textit{Subcase $ c $}: If $(a,b)\in E(T)-E(D')$, then $(a,b)$ is a convenient orientation w.r.t. $D'$. Whence, \nndn. Then there is a vertex $a'$ such that $y_i\ra a'\ra b$ in $D'$. Thus $(y_i,a')\in D$. This case is already treated in Subcase $i.3.1.a$ and Subcase  $i.3.1.b$.\\
	
\textit{\underline{Case $ 3.2 $:} Assume $(y_i,m_i)\in E(D')$.}\\

\textit{Case $ 3.2.1 $: Assume $i=k$, that is, $f=y_k$}. Clearly, $f$ gains no new out-neighbor. We prove that $f$ gains no new second out-neighbor. Suppose that $f\ra a\ra b\ra f$ in $T'$. Then $(f,a)\in E(D)$ and $(a,b)\in E(T)$.\\

\textit{Subcase $ a $:} If $(a,b)\in E(D)$, then clearly \ndn.\\

\textit{Subcase $  b $:} If $(a,b)\in E(D')-E(D)$, then either $(a,b)$ is a convenient orientation w.r.t. $D$ and hence \nd or there is \lose. Then $\exists$ $ j$ such that $rs, ab\in E[Y_j,M_j]$. Since $f=y_k$, then $r\neq y_k$ and $s\neq y_k$. Since $(y_k,a)\in E(D)$, then $a \notin Y$. Whence,  $a\in M$ and $s\in M$. Since $y_ka$ is not a missing edge, then by definition of $G$, $y_ks$ is also not a missing edge. Since $f\ra a$ in $D$ and $a \notin N^{++}_D(s)$, then by Lemma \ref{snlemma} we get $b \in N^{++}_D(f)$.\\

\textit{Subcase $ c $:} If $(a,b)\in E(T)-E(D')$, then it is a convenient orientation w.r.t. $D'$. But $f\ra a$ in $D$ and thus in $D'$, then \nndn. So there is a vertex $a'$ such that $f\ra a'\ra b$ in $D'$.\\

\textit{Subcase $ c.1 $}: Suppose $a'=m_k$. Since $y_ka$ is not a missing edge of $D$, then $a$ is a whole vertex of $D$ or $a\in M-\{m_k\}$. Since $(a,b)\in E(T)-E(D)$, then $a$ is not whole. Thus $\exists$ $ j$ such that $a \in M_j-\{m_k\}$. The definition of $G$ together with  the facts that  $f=y_k\in Y_t$, $a\in M_j-\{m_k\}$ and $y_ka$ is not missing edge imply that $j\geq t$. Since $(a,b)\notin E(D')$ and $a\in M_j$, then $\exists$ $ \alpha >j$ such that $b \in Y_{\alpha}$. Thus $ba'\in E[Y_{\alpha}, ,M_t]$ with $\alpha > t$. So, by using the definition of $G$, $a'b$ is a missing edge of $D$ and $D'$, a contradiction since  $(a',b)\in E(D')$.\\

\textit{Subcase $ c.2 $}: Suppose $a' \neq m_k$. Whence $(y_k,a')\in E(D)$. But $(a',b)\in E(D')$, then this is already discussed in Subcase $i.3.2.1.a$ and Subcase $i.3.2.1.b$.\\

\textit{Case $ 3.2.2 $: Assume $i<k$}. Clearly, $f=y_i$ gains only $m_i$ as an out-neighbor. We prove that $f$ gains only $y_{i+1}$ as a second out-neighbor. \\
	
	\textit{Subcase $ a $:} Suppose that $f \ra m_i \ra b \ra f$ in $T'$ with $b \neq y_{i+1}$. Then $(y_i, m_i)\in E(D')$ and $(m_i,b)\in E(T)$.\\
	
	\textit{Subcase $ a.1 $:} If $(m_i,b)\in E(D)$. Since $b \neq y_{i+1}$ and $m_ib$ is not a missing edge of $D$, then by definition of $G$, also $m_{i+1}b$ is not a missing edge. Since $m_i\ra b$ in $D$ and $m_{i+1}\notin N^{++}(m_i)$, then we must have $m_{i+1}\ra b$ in $D$. Thus $y_i\ra m_{i+1}\ra b$ in $D$.\\
	
	\textit{Subcase $ a.2 $}: If $(m_i, b)\in E(D')-E(D)$. Then $m_ib=m_iy_i$ and hence $b=y_i$, a contradiction. So this case does not hold.\\
	
	\textit{Subcase $ a.3 $:} If $(m_i,b)\in E(T)-E(D')$. Then $(m_i,b)$ is a convenient orientation w.r.t. to $D'$. Since $y_i\ra m_i$ in $D'$, then $b \in N^{++}_{D'}(y_i)$. Then there is a vertex $a'$ such that $y_i\ra a'\ra b$ in $D'$. If $a'=m_i$, then $(m_i,b) \in E(D')$, a contradiction. Thus $a'\neq m_i$ and so  $(y_i,a')\in E(D)$ and $(a',b)\in E(D')$.\\
	
	\textit{Subcase $ a.3.1 $:} If $(a',b)\in E(D)$, then $y_i\ra a'\ra b $ in $D$.\\
	
	\textit{Subcase $ a.3.2 $:} If $(a',b)\in E(D')-E(D)$, then either $(a',b)$ is a convenient orientation w.r.t. $D$ and hence \nd or there is $rs \rightarrow a'b $ in $\Delta$, namely $s \rightarrow b$ and $ a' \notin N^{++}_D(s)$. If $rs=m_iy_i$, then $a'b=m_{i+1}y_{i+1}$. But $b\neq y_{i+1}$, then $b=m_{i+1}$. Since $(y_i,m_i)\in E(D')$, then $(m_{i+1},y_{i+1})\in E(D')$, that is,  $(b,a')\in E(D')$,  a contradiction. Now we claim that $y_is$ is not a missing edge of $D$. Since $y_ia'$ is not  a missing edge,  then $a'\notin Y$.  Whence, $a'\in M$ and $s\in M$. Therefore, using the definition of $G$ and the fact that $y_ia'$ is not  a missing edge, we reach our claim. Since $y_i\ra a'$ in  $D$ and $ a'\notin N^{++}_D(s)$, then  by Lemma \ref{snlemma} we get $b \in N^{++}_D(f)$. \\
	
	\textit{Subcase $ b $:} Assume $y_i\ra a\ra b \ra y_i$  in $T'$ with $a \neq m_i$ and $b \neq y_{i+1}$. Then $(y_i,a)\in E(D)$ and $(a,b)\in E(T)$. If $(a,b)\in E(D')$, then this is already treated in Subcases $ a.3.1 $ and $ a.3.2 $. Else if $(a,b)\in E(T)-E(D')$, then it is a convenient orientation w.r.t. $D'$ and hence $b \in N^{++}_{D'}(y_i)$. Then there is a vertex $a'$ such that $y_i\ra a'\ra b$ in $D'$. If $a'= m_i$, this is already treated in Subcase $i.3.2.2.a.1 $ and Subcase $i.3.2.2. a.2 $. Else if $a' \neq m_i$, this is already treated in  Subcase $i.3.2.2.a.3.1 $ and Subcase $i.3.2.2.a.3.2 $.\\
	
\textbf{\underline{Case $ 4 $:}} \textit{$\exists$ $ 1\leq t\leq l+1$ such that $f\in Y_t$ such that $M_t=\phi$}. Clearly, $f$ gains no new out-neighbor. We prove it gains no new second out-neighbor. Suppose $f\ra a\ra b\ra f$ in $T'$. Then $(f,a)\in E(D)$ and $(a,b)\in E(T)$. We consider the following cases.\\
	
	\textit{Subcase $ a $:} If $(a,b) \in E(D)$, then clearly \ndn.\\
	
	\textit{Subcase $ b $}: If $(a,b)\in E(D')-E(D)$, then either $(a,b)$ is a convenient orientation w.r.t. $D$ and hence \nd or there is \lose. Then $\exists$ $ j$ such that $rs, ab\in E[Y_j,M_j]$. Since $fa$ is not a missing edge of $D$, then $a\notin Y$. Hence $a\in M_j$ and $s\in M_j$. Since $a,s\in M_j$ and $fa$ is not a missing edge, then also $fs$ is not a missing edge. Since $f\ra a$ in $D$ and $a \notin N^{++}_D(s)$, then by Lemma \ref{snlemma} we get $b \in N^{++}_D(f)$.\\
	
	\textit{Subcase $ c $}: If $(a,b)\in E(T)-E(D')$, then $(a,b)$ is a convenient orientation w.r.t. $D'$. But $f \ra a$ in $D$ and $D'$, then $b \in N^{++}_{D'}(f)$. So there is a vertex $a'$ such that $f \ra a' \ra b$ in $D'$. Then $(f,a') \in E(D)$. But this is already treated in Subcase $i.4.a$ and Subcase $i.4.b$.\\	
	
\textbf{\underline{Case $ 5 $:}} \textit{$f\in Y_{l+2}$}. Exactly same as Case $i. 4 $, with only one difference in Subcase $ b $. The difference is that   $fs$ is not a missing edge in Subcase $ i.5.b $, because $E[Y_{l+2}, M_j]=\phi$ by definition of $G$, while in Subcase $i. 4.b $ we had to prove it.\\

\textbf{\underline{Case $ 6 $:}} \textit{$f\in V(G)-(Y\cup M)$}. Clearly, $f$ gains no new out-neighbor. We will prove that it gains no new second out-neighbor. Suppose $f\ra a\ra b\ra f$ in $T'$. Then $(f,a)\in E(D)$ and $(a,b)\in E(T)$. We consider the following subcases.\\

\textit{Subcase $  a $:} If $(a,b)\in E(D)$, then clearly \ndn.\\

\textit{Subcase $ b $:} If $(a,b)\in E(D')-E(D)$, then either $(a,b)$ is a convenient orientation w.r.t. $D$ and hence \nd or there is \lose. Thus $\exists$ $ j$ such that $rs, ab \in E[Y_j,M_j]$. If $a \in Y_j$, then $fa$ is a missing edge of $D$, a contradiction. So $a \in M_j$ and hence $s\in M_j$. Then $fs$ is not a missing edge. Since $f\ra a $ in $D$ and $a\notin N^{++}_D(s)$, then by Lemma \ref{snlemma} we get $b \in N^{++}_D(f)$.\\

\textit{Subcase $ c $:} If $(a,b)\in E(T)-E(D')$. Then $(a,b)$ is a convenient orientation w.r.t. $D'$. But $f\ra a$ in $D$ and so $D'$, then $b\in N^{++}_{D'}(f)$. Then there is a vertex $a'$ such that $f\ra a'\ra b$ in $D'$. Then $(f,a')\in E(D)$ and $(a',b)\in E(D')$. But this is already discussed in Subcase $i.6. a $ and Subcase $i.6. b $.\\

Therefore,  $f$ has the SNP in $D$ in all cases. This completes the proof of Case $ i $.\\

\noindent \textbf{\underline{Case $ ii $:}} \textit{$\Delta[E(C)]$ contains exactly one arc, say $uv\ra xy$}.\\

\noindent	Assume without loss of generality that $(u,v)$ is a convenient orientation of the good missing edge $uv$. We add to $D$ the arcs $(u,v)$ and $(x,y)$, we assign to the good missing edges $xv$, $yz$ and $zu$ a convenient orientation and then we add them to $D$. The obtained oriented graph $D'$ is missing $G'=G-(\cup E[Y_j, M_j] \cup E(C))$ which is a threshold graph. We assign to the missing edges of $D'$ convenient orientations and we add them to $D'$ to get a tournament $T$. Let $L$ be a local median order of $T$ and let $f$ denote its feed vertex. Reorient all the missing edges incident to $f$ towards $f$, except those whose out-degree in $\Delta$ is not zero.  \same . We will prove that $f$ has the SNP in $D$ also. For this purpose, we consider the following cases.\\

\textbf{\underline{Case $ 1 $:}} \textit{$f$ is a whole vertex.} This is the same as Case $i.1$.\\

\textbf{\underline{Case $ 2 $:}} \textit{$\exists$ $ 1\leq t\leq l$ such that $f\in M_t$.} Exactly same as Case $i.2$, with only one difference in the subcases when $f \ra a \ra b$ with $(f,a) \in E(D)$ and $(a,b) \in E(D')-E(D)$ and it is not convenient w.r.t. $D$. Such subcase we call it unsteady. As usual, since $(a,b)$ is not convenient w.r.t. $D$, then there is  \lose. The difference is that in the unsteady subcases of Case $ ii.2 $ either  $rs, ab \in E[Y_j, M_j]$ for some $j$ or $rs=uv, ab=xy$. If $rs, ab \in E[Y_j, M_j]$ for some $j$, we proceed exactly in the same way as in the unsteady subcases of Case $i.2$.  Else if $(r,s)=(u,v)$ and $( a,b)=(x,y)$, then $fs$ is not a missing edge because $E[M, C] = \phi$. Since $f\ra a$ in $D$ and $a \notin N^{++}_D(s)$, then by Lemma \ref{snlemma} we get $b \in N^{++}_D(f)$.\\

\textbf{\underline{Case $ 3 $:}} \textit{$\exists$ $ 1\leq t\leq l$ such that $f\in Y_t$ and $M_t\neq \phi$}. Exactly same as Case $i.3$, with only one difference in the subcases when $f \ra a \ra b$ with $(f,a) \in E(D)$ and $(a,b) \in E(D')-E(D)$ and it is not convenient w.r.t. $D$. As usual, since $(a,b)$ is not convenient w.r.t. $D$, then there is  \lose. The difference is that in the unsteady subcases of Case $ ii.3 $ there are two cases to be consider: Either  $rs, ab \in E[Y_j, M_j]$ for some $j$, or $rs=uv$ and  $ab=xy$. If $rs, ab \in E[Y_j, M_j]$ for some $j$, we proceed exactly in the same way as in the unsteady subcases of Case $i.3$.  Else if $(r,s)=(u,v)$ and $( a,b)=(x,y)$, then $fa=fx$ is  a missing edge because $G[Y \cup C]$ is a complete split graph, a contradiction. \\

\textbf{\underline{Case $ 4 $:}} \textit{$\exists$ $ 1\leq t\leq l+1$ such that $f\in Y_t$ such that $M_t=\phi$}. Exactly same as Case $ i.4 $, with only one difference in Subcase $ b $. The difference is that in Subcase $ii.4.b$ there are two possibilities for the edges $rs, ab$: Either  $rs, ab \in E[Y_j, M_j]$ for some $j$,  or $(r,s)=(u,v)$ and  $(a,b)=(x,y)$. The first case is treated in Subcase $i.4.b$. However, the second case does not exist since otherwise  $fa=fx$ is  a missing edge because $G[Y \cup C]$ is a complete split graph, which contradicts the fact that $(f,a) \in E(D)$. \\

\textbf{\underline{Case $ 5 $:}} \textit{$f\in Y_{l+2}$}. Exactly same as Case $ i.4 $, with two differences in Subcase $ b $. The first difference is that in Subcase $ii.5.b$ there are two possibilities for the edges $rs, ab$: Either  $rs, ab \in E[Y_j, M_j]$ for some $j$,  or $(r,s)=(u,v)$ and  $(a,b)=(x,y)$. The first case is already treated in Subcase $i.4.b$. However, the second case does not exist since otherwise  $fa=fx$ is  a missing edge because $G[Y \cup C]$ is a complete split graph, which contradicts the fact that $(f,a) \in E(D)$. The second  difference is that   $fs$ is not a missing edge in Subcase $ ii.5.b $, because $E[Y_{l+2}, M_j]=\phi$ by definition of $G$, while in Subcase $ i.4.b $ we had to prove it.\\

\textbf{\underline{Case $ 6 $:}} \textit{$f=u$}. Clearly, $u$ gains only $v$ as a new first out-neighbor. We prove that it gains only $y$ as a new second out-neighbor.\\

\underline{\textit{Case  $6.1$}:} \textit{Suppose that $u\ra v\ra b\ra u$ in $T'$ with $b\neq y$}. Then $(v,b) \in E(T)$. Note that $b\neq x$ because $u\ra x$ in $D$. \\

\textit{Subcase $a$}: If $(v,b)\in E(D)$, then either  $b=z$, $b \in S$ or $b$ is a whole vertex. Then by the losing relation $uv\ra xy$, we get $b\in N^{++}_D(u)$.\\

\textit{Subcase $b$}: If $(v,b)\in E(D')-E(D)$,  then $b = x$ or $b = u$, a contradiction. Thus this case does not exist.\\

\textit{Subcase $c$}: If $(v,b)\in E(T)-E(D')$, then $b\in K$ and $(v,b)$ is a convenient orientation w.r.t. $D'$. Then there exists $v'$ such that $u\ra v'\ra b\ra u$ in $D'$. Since $(v',b)\in E(D')$ and $b\in K$, then $v'\notin C$. Since $v'\notin C$ and $(f,v')\in E(D')$, then $(f,v')\in E(D)$.\\

\textit{Subcase $c.1$}: If  $(v',b)\in E(D)$, then $b\in N^{++}_D(f)$.\\

\textit{Subcase $c.2$}:  If  $(v',b)\in E(D')-E(D)$, then  $(v',b)$ is a convenient orientation w.r.t $D$ and hence $b\in N^{++}_D(f)$ or there is $rs \rightarrow v'b $ in $\Delta$, namely $s \rightarrow b$ and $ v' \notin N^{++}_D(s)$. Since $v' \notin C$, then $\exists$ $j$ such that  $rs, v'b \in E[Y_j, M_j]$. Since $b \in K$, then $b \in Y_j$. Whence, $v' \in M_j$ and $s \in M_j$. Thus $fs$ is not a missing edge of $D$ by definition of $G$. But $(f,v')\in E(D)$ and $v' \notin N^{++}_D(s)$, then  by Lemma \ref{snlemma} we get $b\in N^{++}_D(f)$.\\

\underline{\textit{Case $6.2$}:}\textit{ Suppose that $u\ra a\ra b\ra u$ in $T'$ with $a\neq v$ and $b\neq y$}. Then $(u,a)\in  E(D)$  and $(a,b)\in  E(T)$. Note that $a\notin K\cup \{u,v,y,z\}$ and $b\notin \{u,v,x, y\}$.\\

\textit{Subcase $a$}: If $(a,b)\in E(D)$, $b\in N^{++}_D(u)$.\\

\textit{Subcase $b$}: If $(a,b)\in E(D')-E(D)$, then either $ab \in E(C)$ or $ ab \in E[Y_j,M_j]$ for some $j$. If $ab \in E(C)$, then $(a,b)=(x,z)$ because $a\notin \{u,v,y,z\}$ and $b\notin \{u,v,y,x\}$. Thus $xz$ is a missing edge of $D$, a contradiction. It follows that $ ab \in E[Y_j,M_j]$ for some $j$. Then either $(a,b)$ is a convenient orientation w.r.t. $D$ and hence $b\in N^{++}_D(u)$ or there is $rs \rightarrow ab $ in $\Delta$, namely $s \rightarrow b$ and $ a \notin N^{++}_D(s)$. Since $ a\notin K$, then $ a \in M_j$ and thus $ s \in M_j$. So by definition of $G$, $us$ is not a missing edge of $D$. But $(u,a)\in  E(D)$ and $ a \notin N^{++}_D(s)$, then  by Lemma \ref{snlemma} we get $b\in N^{++}_D(u)$.\\

\textit{Subcase $c$}: If $(a,b)\in E(T)-E(D')$, then $b\in K$ and $(a,b)$ is a convenient orientation w.r.t. $D'$. Since $(u,a)\in  E(D)$ and so in $D'$, there exists $v'$ such that $u\ra v'\ra b$ in $D'$. Since $(v',b)\in E(D')$ and $b\in K$, then $v'\notin C$. Since $v'\notin C$ and $(u,v')\in E(D')$, then $(u,v')\in E(D)$. But this is already treated in Subcase $ii.6.1.c.1$ and Subcase $ii.6.1.c.2$.\\

\textbf{\underline{Case $ 7$:}} \textit{$f \in C-\{u\}$}. It is clear that $f$ gains no new first out-neighbor. We will prove that it gains no new second out-neighbor. Suppose that $f\ra a\ra b\ra f$ in $T'$. Then $(f,a)\in  E(D)$, $(a,b)\in E(T)$  and $ a \notin K$.\\

\textit{Subcase $a$}: If $(a,b)\in E(D)$, $b\in N^{++}_D(u)$.\\

\textit{Subcase $b$}: If $(a,b)\in E(D')-E(D)$, then either $(a,b)$ is a convenient orientation w.r.t. $D$ and hence  $b \in N^{++}_D(f)$ or there is there is $rs \rightarrow ab $ in $\Delta$, namely $s \rightarrow b$ and $ a \notin N^{++}_D(s)$. If $(r,s)= (u,v)$ and $(a,b)=(x,y)$, then $a=x$ and $b=y$ and so $f \notin \{x,y\}$. Note that $f \neq v$, since otherwise $fa=vx$ is a missing edge of $D$, a contradiction. Since  $f \in C-\{u,v,x,y\}$, then $f=z$ and hence $fs=zv$ is not a missing edge of $D$. Else if $rs, ab \in E[Y_j, M_j]$ for some $j$, then $r \neq f$ and $s \neq f$. Since $ a \notin K$, then $a \notin Y_j$ and so $a \in M_j$. Whence, $s \in M_j$. Thus $fs$ is not missing edge of $D$ by definition of $G$. Therefore, by the losing relation $rs \ra ab$ in $\Delta$, we get $b \in N^{++}_D(f)$.\\

\textit{Subcase $c$}: If $(a,b)\in E(T)-E(D')$, then $b \in K$ and $(a,b)$ is a convenient orientation w.r.t. $D'$. So  there is $v'$ such that $f\ra v' \ra b$ in $D'$. Since $(v',b)\in E(D')$ and $b\in K$, then $v'\notin C$. Since $v'\notin C$ and $(f,v')\in E(D')$, then $(f,v')\in E(D)$. But this is already treated in Subcase $ii.6.1.c.1$ and Subcase $ii.6.1.c.2$.\\

\textbf{\underline{Case $ 8$:}} \textit{$f\in V(G)-(Y\cup M \cup C) = A \cup (X-X_1)= A \cup (X-Y_1) $}.  Exactly same as Case $ i.6 $, with only one difference in Subcase $ b $. The difference is that in Subcase $ii.8.b$ there are two possibilities for the edges $rs, ab$: Either  $rs, ab \in E[Y_j, M_j]$ for some $j$,  or $(r,s)=(u,v)$ and  $(a,b)=(x,y)$. The first case is treated in Subcase $i.6.b$. However, for the  case $(r,s)=(u,v)$ and  $(a,b)=(x,y)$, $f$ must belong to $A$ since otherwise $fa=fx$ is a missing edge because $G[X \cup C]$  is a complete split graph, which contradicts the fact that $(f,a) \in E(D)$. Thus $fs=fv$ is not missing edge of $D$ because $E[A, C]=\phi$ by definition of $G$. Since $f\ra a$ in $D$ and $a \notin N^{++}_D(s)$, then by Lemma \ref{snlemma} we get $b \in N^{++}_D(f)$.\\

 Therefore,  $f$ has the SNP in $D$   when $\Delta$ has  exactly one arc between the edges of $C$.\\

\noindent \textbf{\underline{Case $ iii $:}} \textit{Suppose that $\Delta[E(C)]$ has exactly two arcs , say $uv\ra xy$ and $vx\ra yz$.}\\

\noindent Then $u\ra x\ra z\ra v\ra y$ in $D$ and $uv, vx, uz$ are good missing edges. Assume without loss of generality that $(u,v)$ is a convenient orientation w.r.t. $D$. Add the arcs $(u,v)$ and $(x,y)$ to $D$. If $(v,x)$ is a convenient orientation of $vx$, then add the arcs $(v,x)$ and $(y,z)$. Otherwise, add the arcs $(x,v)$ and $(z,y)$. Assign to $uz$ a convenient orientation and add it to $D$. The obtained oriented graph $D'$ is missing  $G'=G-(\cup E[Y_j, M_j] \cup E(C))$ which is a threshold graph.  So all the missing edges of $D'$ are good. We assign to them a convenient orientation and we add them to get a tournament $T$. Let $L$ be a local median order of $T$ and let $f$ denote its feed vertex. Reorient all the missing edges incident to $f$ towards $f$ except  those whose out-degree in $\Delta$ is not zero. The same order $L$ is again a local median order of the obtained tournament $T'$ and $f$ has the SNP in $T'$. We will prove that $f$  has the SNP in $D$ also. We have the following cases.\\

\textbf{\underline{Case $ 1 $:}} \textit{$f$ is a whole vertex.} This is the same as Case $i.1$.\\

\textbf{\underline{Case $ 2 $:}} \textit{$\exists$ $ 1\leq t\leq l$ such that $f\in M_t$.} Exactly same as Case $i.2$, with only one difference in the unsteady subcases, that is, in the subcases where $f \ra a \ra b$ with $(f,a) \in E(D)$ and $(a,b) \in E(D')-E(D)$ and it is not convenient w.r.t. $D$.  As usual, since $(a,b)$ is not convenient w.r.t. $D$, then there is  \lose. The difference is that in the unsteady subcases of Case $ iii.2 $  either  $rs, ab \in E[Y_j, M_j]$ for some $j$ or $rs, ab \in E(C)$. If $rs, ab \in E[Y_j, M_j]$ for some $j$, we proceed exactly in the same way as in the unsteady subcases of Case $i.2$.  Else if $rs, ab \in E(C)$ (the possible cases are: $(r,s)= (u,v)$ and $ (a,b)=(x,y)$, $(r,s)= (v,x)$ and $ (a,b)=(y,z)$ if $ (v,x) $ is a convenient orientation of $vx$ or  $(r,s)= (x,v)$ and $(a,b)=(z,y)$ if $ (x,v) $ is a convenient orientation of $vx$), then $fs$ is not a missing edge because $E[M, C] = \phi$.  Since $f\ra a$ in $D$ and $a \notin N^{++}_D(s)$, then by Lemma \ref{snlemma} we get $b \in N^{++}_D(f)$.\\

\textbf{\underline{Case $ 3 $:}} \textit{$\exists$ $ 1\leq t\leq l$ such that $f\in Y_t$ and $M_t\neq \phi$}. Exactly same as Case $i.3$, with only one difference in the subcases when $f \ra a \ra b$ with $(f,a) \in E(D)$ and $(a,b) \in E(D')-E(D)$ and it is not convenient w.r.t. $D$. As usual, since $(a,b)$ is not convenient w.r.t. $D$, then there is  \lose. The difference is that in the unsteady subcases of Case $ iii.3 $ there are two cases to be consider: Either  $rs, ab \in E[Y_j, M_j]$ for some $j$, or $rs, ab \in E(C)$. If $rs, ab \in E[Y_j, M_j]$ for some $j$, we proceed exactly in the same way as in the unsteady subcases of Case $i.3$.  Else if $rs, ab \in E(C)$, then $fa$ is  a missing edge because $G[Y \cup C]$ is a complete split graph, a contradiction. Thus this case does not exist.\\

\textbf{\underline{Case $ 4 $:}} \textit{$\exists$ $ 1\leq t\leq l+1$ such that $f\in Y_t$ such that $M_t=\phi$}. Exactly same as Case $ i.4 $, with only one difference in Subcase $ b $. The difference is that in Subcase $iii.4.b$ there are two possibilities for the edges $rs, ab$: Either  $rs, ab \in E[Y_j, M_j]$ for some $j$,  or $rs, ab \in E(C)$. The first case is treated in Subcase $i.4.b$. However, the second case does not exist since otherwise  $fa$ is  a missing edge because $G[Y \cup C]$ is a complete split graph, which contradicts the fact that $(f,a) \in E(D)$. This means that  this case does not exist. \\

\textbf{\underline{Case $ 5 $:}} \textit{$f\in Y_{l+2}$}. Exactly same as Case $ i.4 $, with two differences in Subcase $ b $. The first difference is that in Subcase $iii.5.b$ there are two possibilities for the edges $rs, ab$: Either  $rs, ab \in E[Y_j, M_j]$ for some $j$,  or $rs, ab \in E(C)$. The first case is already treated in Subcase $i.4.b$. However, the second case does not exist since otherwise  $fa$ is  a missing edge because $G[Y \cup C]$ is a complete split graph, which contradicts the fact that $(f,a) \in E(D)$. The second  difference is that   $fs$ is not a missing edge in Subcase $ iii.5.b $, because $E[Y_{l+2}, M_j]=\phi$ by definition of $G$, while in Subcase $ i.4.b $ we had to prove it.\\

\textbf{\underline{Case $ 6$:}} \textit{$f=u$}. Clearly, $f$ gains only $v$ as a new first out-neighbor and  $y$  as a new second out-neighbor. We prove it gains only $y$ as a second out-neighbor.\\

\underline{\textit{Case $6.1$}:} \textit{Suppose that $u\ra v\ra b\ra u$ in $T'$ with $b\neq y$}. Then $(v,b) \in E(T)$.  Since $x$ and $z$ are first and second out-neighbors of $u$ in $D$ respectively, then we may assume that $b\notin C$ and hence $(v,b)\notin E(D')-E(D)$.\\

\textit{Subcase $a$}: If $(v,b)\in E(D)$, then either  $b \in S$ or $b$ is a whole vertex. Then by the losing relation $uv\ra xy$, we get $b\in N^{++}_D(u)$.\\

\textit{Subcase $b$}: If $(v,b)\in E(T)-E(D')$, then $b\in K$ and $(v,b)$ is a convenient orientation w.r.t. $D'$. But this is exactly the same as Subcase $ii.6.1.c$.\\

\underline{\textit{Case $6.2:$}} \textit{Suppose that $u\ra a\ra b\ra u$ in $T'$ with $a\neq v$ and $b\neq y$}. Thus $(a,b)\in  E(T)$.  Since $a\neq v$, then $(u,a)\in E(D)$ and hence $a\notin K$. Since $x$ and $z$ are first and second out-neighbors of $u$ in $D$ and $u\ra v$ in $T'$, then we may assume $b\notin C$.\\

\textit{Subcase $a$}: If $(a,b)\in E(D)$, $b\in N^{++}_D(u)$.\\

\textit{Subcase $b$}: If $(a,b)\in E(D')-E(D)$, then either $ab \in E(C)$ or $ ab \in E[Y_j,M_j]$ for some $j$. If $ab \in E(C)$, then $b \in C$, a contradiction. Thus $ ab \in E[Y_j,M_j]$ for some $j$. It follows that either $(a,b)$ is a convenient orientation w.r.t. $D$ and hence $b\in N^{++}_D(u)$ or there is $rs \rightarrow ab $ in $\Delta$, namely $s \rightarrow b$ and $ a \notin N^{++}_D(s)$. Since $ a\notin K$, then $ a \in M_j$ and thus $ s \in M_j$. So by definition of $G$, $us$ is not a missing edge of $D$. But $(u,a)\in  E(D)$ and $ a \notin N^{++}_D(s)$, then  by Lemma \ref{snlemma} we get $b\in N^{++}_D(u)$.\\

\textit{Subcase $c$}: If $(a,b)\in E(T)-E(D')$, then $b\in K$ and $(a,b)$ is a convenient orientation w.r.t. $D'$. Since $(u,a)\in  E(D)$ and so in $D'$, there exists $v'$ such that $u\ra v'\ra b$ in $D'$. Since $(v',b)\in E(D')$ and $b\in K$, then $v'\notin C$. Since $v'\notin C$ and $(u,v')\in E(D')$, then $(u,v')\in E(D)$. But this is already treated in Subcase $iii.6.2.a$ and Subcase $iii.6.2.b$.\\

\textbf{\underline{Case $ 7$:}} \textit{$f=v$}. Here there are two cases to be consider.\\

\textit{\underline{Case $ 7.1$:} Assume $(v,x)\in E(D')$.} Then $v$ gains $x$ as first out-neighbor and  $z$ as a second out-neighbor. This case is similar to Case $iii.6$.\\

\textit{\underline{Case $ 7.2$:} Assume $(x,v)\in E(D')$.} Then  $(z,y)\in E(D')$. Clearly, $v$ gains no new first out-neighbor. We prove that it gains no new second out-neighbor. Suppose $f\ra a\ra b\ra f$ in $T'$. Then $(f,a)\in E(D)$ and hence $a\notin K$.\\

\textit{Subcase $a$}: If $(a,b)\in E(D)$, $b\in N^{++}_D(f)$.\\

\textit{Subcase $b$}: If $(a,b)\in E(D')-E(D)$, then  either $(a,b)$ is a convenient orientation w.r.t. $D$ and hence $b\in N^{++}_D(v)$ or there is $rs \rightarrow ab $ in $\Delta$, namely $s \rightarrow b$ and $ a \notin N^{++}_D(s)$. So either $rs, ab \in E(C)$ or $ rs, ab \in E[Y_j,M_j]$ for some $j$. If $ab \in E(C)$, then $(a,b)=(x,y)$ or $(a,b)=(z,y)$ and hence $b=y$, which is impossible because $b\ra f$ in $T'$ while $f\ra y$ in $D$. Thus $rs, ab \in E[Y_j,M_j]$ for some $j$.  Since $ a\notin K$, then $ a \in M_j$ and thus $ s \in M_j$. So by definition of $G$, $fs$ is not a missing edge of $D$. But $(f,a)\in  E(D)$ and $ a \notin N^{++}_D(s)$, then  by Lemma \ref{snlemma} we get $b\in N^{++}_D(f)$.\\

\textit{Subcase $c$}: If $(a,b)\in E(T)-E(D')$, then $b\in K$ and $(a,b)$ is a convenient orientation w.r.t. $D'$. Since $(f,a)\in  E(D)$ and so in $D'$, there exists $v'$ such that $f\ra v'\ra b$ in $D'$. Since $(v',b)\in E(D')$ and $b\in K$, then $v'\notin C$. Since $v'\notin C$ and $(f,v')\in E(D')$, then $(f,v')\in E(D)$. But this is already treated in Subcase $iii.7.2.a$ and Subcase $iii.7.2.b$.\\

\textbf{\underline{Case $ 8$:}} \textit{$f=x$}. Here there are two cases to be consider.\\

\textit{\underline{Case $ 8.1$:} Assume $(v,x)\in E(D')$.} Clearly, $x$ gains no new first out-neighbor. We prove it gains no new second out-neighbor. Suppose $x\ra a\ra b\ra x$ in $T'$. Then $(x,a)\in E(D)$, $(a,b) \in E(T)$ and $a\notin K$. Note that $a\notin \{x,y,u,v\}$ and we may assume $b\notin \{x,z,v\}$.\\

\textit{Subcase $a$}: If $(a,b)\in E(D)$, $b\in N^{++}_D(x)$.\\

\textit{Subcase $b$}: If $(a,b)\in E(D')-E(D)$, then  either $(a,b)$ is a convenient orientation w.r.t. $D$ and hence $b\in N^{++}_D(x)$ or there is $rs \rightarrow ab $ in $\Delta$, namely $s \rightarrow b$ and $ a \notin N^{++}_D(s)$. So either $rs, ab \in E(C)$ or $ rs, ab \in E[Y_j,M_j]$ for some $j$. If $ab \in E(C)$, then $(a,b)=(x,y)$ or $(a,b)=(y,z)$ and hence $a \in \{x,y\}$, a contradiction. Thus $rs, ab \in E[Y_j,M_j]$ for some $j$.  Since $ a\notin K$, then $ a \in M_j$ and thus $ s \in M_j$. So by definition of $G$, $fs$ is not a missing edge of $D$. But $(x,a)\in  E(D)$ and $ a \notin N^{++}_D(s)$, then  by Lemma \ref{snlemma} we get $b\in N^{++}_D(f)$.\\

\textit{Subcase $c$}: If $(a,b)\in E(T)-E(D')$, then $b\in K$ and $(a,b)$ is a convenient orientation w.r.t. $D'$. Since $(x,a)\in  E(D)$ and so in $D'$, there exists $v'$ such that $x\ra v'\ra b$ in $D'$. Since $(v',b)\in E(D')$ and $b\in K$, then $v'\notin C$. Since $v'\notin C$ and $(x,v')\in E(D')$, then $(x,v')\in E(D)$. But this is already treated in Subcase $iii.8.1.a$ and Subcase $iii.8.1.b$.\\

\textit{\underline{Case $ 8.2$:} Assume $(x,v)\in E(D')$.} Clearly, $x$ gains only $v$ as a first out-neighbor and $y$ as a second out-neighbor. We prove it gains only $y$ as a second out-neighbor. Note that $z$ and $v$ are first and second out-neighbors of $x$ in $D$,  $(v,u)\notin E(T)$ and $(x,u)\notin E(T)$.\\

\textit{Subcase $a$}: Suppose that $x\ra v\ra b\ra x$ in $T'$ with $b\neq y$. Then $(v,b) \in E(T)$.  By the previous note, we may assume that $b\notin C$ and hence $(v,b)\notin E(D')-E(D)$.\\

\textit{Subcase $a.1$}: If $(v,b)\in E(D)$, then either  $b \in S$ or $b$ is a whole vertex. Then by the losing relation $xv \ra zy$, we get $b\in N^{++}_D(x)$.\\

\textit{Subcase $a.2$}: If $(v,b)\in E(T)-E(D')$, then $b\in K$ and $(v,b)$ is a convenient orientation w.r.t. $D'$. But this exactly the same as Subcase $ii.6.a.3$.\\

\textit{Subcase $b$}: Suppose $x\ra a\ra b\ra x$ in $T'$ with $b\neq y$ and $a\neq v$. Then $(x,a)\in E(D)$ and thus $a\notin K\cup \{x,y,v,u\}$. Note that  suppose $b\notin \{x,y,v,z\}$. We argue exactly as in  Subcase $ iii.7.2$.\\

\textbf{\underline{Case $ 9$:}} \textit{$ f=y$.} Clearly, $f$ gains no new first out-neighbor. We prove that it gains no new second out-neighbor. Note that $(z,y)\in E(T')$ and  $u$ and $x$ are first and second out-neighbors of $y$, respectively. Suppose that $f\ra a\ra b\ra f$ in $T'$. Then $(f,a)\in E(D)$ and $a\notin K \cup \{x,y, v,z\}$.\\

\textit{Subcase $a$}: If $(a,b)\in E(D)$, then  $b\in N^{++}_D(f)$.\\

\textit{Subcase $b$}: If $(a,b)\in E(D')-E(D)$, then  either $(a,b)$ is a convenient orientation w.r.t. $D$ and hence $b\in N^{++}_D(f)$ or there is $rs \rightarrow ab $ in $\Delta$, namely $s \rightarrow b$ and $ a \notin N^{++}_D(s)$. So either $rs, ab \in E(C)$ or $ rs, ab \in E[Y_j,M_j]$ for some $j$. If $ab \in E(C)$, then $(a,b)=(x,y)$ or $(a,b)=(y,z)$ or $(a,b)=(z,y)$ and hence $a \in \{x,y,z\}$, a contradiction. Thus $rs, ab \in E[Y_j,M_j]$ for some $j$.  Since $ a\notin K$, then $ a \in M_j$ and thus $ s \in M_j$. So by definition of $G$, $fs$ is not a missing edge of $D$. But $(f,a)\in  E(D)$ and $ a \notin N^{++}_D(s)$, then  by Lemma \ref{snlemma} we get $b\in N^{++}_D(f)$.\\

\textit{Subcase $c$}: If $(a,b)\in E(T)-E(D')$, then $b\in K$ and $(a,b)$ is a convenient orientation w.r.t. $D'$. Since $(f,a)\in  E(D)$ and so in $D'$, there exists $v'$ such that $f\ra v'\ra b$ in $D'$. Since $(v',b)\in E(D')$ and $b\in K$, then $v'\notin C$. Since $v'\notin C$ and $(f,v')\in E(D')$, then $(f,v')\in E(D)$. But this is already treated in Subcase $iii.9.a$ and Subcase $iii.9.b$.\\

\textbf{\underline{Case $ 10$:}} \textit{$ f=z$.} Exactly same as Case $iii.9$ with difference that $yz$ is reoriented so that $(y,z)\in E(T')$.\\

\textbf{\underline{Case $  11$:}} \textit{$f\in V(G)-(Y\cup M \cup C) = A \cup (X-X_1)= A \cup (X-Y_1) $}.  Exactly same as Case $ i.6 $, with only one difference in Subcase $ b $. The difference is that in Subcase $iii.11.b$ there are two possibilities for the edges $rs, ab$: Either  $rs, ab \in E[Y_j, M_j]$ for some $j$,  or $rs, ab \in E(C)$. The first case is treated in Subcase $i.6.b$. However, for the second case, $f$ must belong to $A$ since otherwise $fa$ is a missing edge because $G[X \cup C]$  is a complete split graph, which contradicts the fact that $(f,a) \in E(D)$. Thus $fs$ is not missing edge of $D$ because $E[A, C]=\phi$ by definition of $G$. Since $f\ra a$ in $D$ and $a \notin N^{++}_D(s)$, then by Lemma \ref{snlemma} we get $b \in N^{++}_D(f)$.\\

Therefore, due to all above discussions, $f$ has the SNP in $D$.\\

\noindent \textbf{\underline{Case $ iv $:}} \textit{Suppose that $\Delta[E(C)]$ has exactly two arcs,   say $uv\ra xy \ra zu$.}\\

\noindent Then  $v\ra y\ra u\ra x\ra z$ in $D$ and $uv, yz, vx$ are good missing edges. Assume without loss of generality that $(u,v)$ is a convenient orientation w.r.t. $D$. Add the arcs $(u,v)$, $(x,y)$ and $(z,u)$to $D$. Assign to the good missing edges $vx$ and $zy$ a convenient orientation and add them to $D$. The obtained oriented graph $D'$ is missing  $G'=G-(\cup E[Y_j, M_j] \cup E(C))$ which is a threshold graph.  So all the missing edges of $D'$ are good. We assign to them a convenient orientation and we add them to get a tournament $T$. Let $L$ be a local median order of $T$ and let $f$ denote its feed vertex. Reorient all the missing edges incident to $f$ towards $f$ except  those whose out-degree in $\Delta$ is not zero. The same order $L$ is again a local median order of the obtained tournament $T'$ and $f$ has the SNP in $T'$. We will prove that $f$  has the SNP in $D$. We have the following cases.\\

\textbf{\underline{Case $ 1 $:}} \textit{$f$ is a whole vertex.} This is the same as Case $i.1$.\\

\textbf{\underline{Case $ 2 $:}} \textit{$\exists$ $ 1\leq t\leq l$ such that $f\in M_t$.} Exactly same as Case $i.2$, with only one difference in the unsteady subcases, that is, in the subcases where $f \ra a \ra b$ with $(f,a) \in E(D)$ and $(a,b) \in E(D')-E(D)$ and it is not convenient w.r.t. $D$.  As usual, since $(a,b)$ is not convenient w.r.t. $D$, then there is  \lose. The difference is that in the unsteady subcases of Case $ iv.2 $  either  $rs, ab \in E[Y_j, M_j]$ for some $j$ or $rs, ab \in E(C)$. If $rs, ab \in E[Y_j, M_j]$ for some $j$, we proceed exactly in the same way as in the unsteady subcases of Case $i.2$.  Else if $rs, ab \in E(C)$ (the possible cases are: $(r,s)= (u,v)$ and $ (a,b)=(x,y)$ or  $(r,s)= (x,y)$ and $ (a,b)=(z,u)$), then $fs$ is not a missing edge because $E[M, C] = \phi$.  Since $f\ra a$ in $D$ and $a \notin N^{++}_D(s)$, then by Lemma \ref{snlemma} we get $b \in N^{++}_D(f)$.\\

\textbf{\underline{Case $ 3 $:}} \textit{$\exists$ $ 1\leq t\leq l$ such that $f\in Y_t$ and $M_t\neq \phi$}. Exactly same as Case $i.3$, with only one difference in the subcases when $f \ra a \ra b$ with $(f,a) \in E(D)$ and $(a,b) \in E(D')-E(D)$ and it is not convenient w.r.t. $D$. As usual, since $(a,b)$ is not convenient w.r.t. $D$, then there is  \lose. The difference is that in the unsteady subcases of Case $ iv.3 $ there are two cases to be consider: Either  $rs, ab \in E[Y_j, M_j]$ for some $j$, or $rs, ab \in E(C)$. If $rs, ab \in E[Y_j, M_j]$ for some $j$, we proceed exactly in the same way as in the unsteady subcases of Case $i.3$.  Else if $rs, ab \in E(C)$, then $fa$ is  a missing edge because $G[Y \cup C]$ is a complete split graph, a contradiction. Thus this case does not exist.\\

\textbf{\underline{Case $ 4 $:}} \textit{$\exists$ $ 1\leq t\leq l+1$ such that $f\in Y_t$ such that $M_t=\phi$}. Exactly same as Case $ i.4 $, with only one difference in Subcase $ b $. The difference is that in Subcase $iv.4.b$ there are two possibilities for the edges $rs, ab$: Either  $rs, ab \in E[Y_j, M_j]$ for some $j$,  or $rs, ab \in E(C)$. The first case is treated in Subcase $i.4.b$. However, the second case does not exist since otherwise  $fa$ is  a missing edge because $G[Y \cup C]$ is a complete split graph, which contradicts the fact that $(f,a) \in E(D)$. This means that  this case does not exist. \\

\textbf{\underline{Case $ 5 $:}} \textit{$f\in Y_{l+2}$}. Exactly same as Case $ i.4 $, with two differences in Subcase $ b $. The first difference is that in Subcase $iv.5.b$ there are two possibilities for the edges $rs, ab$: Either  $rs, ab \in E[Y_j, M_j]$ for some $j$,  or $rs, ab \in E(C)$. The first case is already treated in Subcase $i.4.b$. However, the second case does not exist since otherwise  $fa$ is  a missing edge because $G[Y \cup C]$ is a complete split graph, which contradicts the fact that $(f,a) \in E(D)$. The second  difference is that   $fs$ is not a missing edge in Subcase $ iv.5.b $, because $E[Y_{l+2}, M_j]=\phi$ by definition of $G$, while in Subcase $i.4.b $ we had to prove it.\\

\textbf{\underline{Case $ 6 $:}} \textit{$f=u$}. Clearly, $u$ gains only $v$ as a first out-neighbor and gains $y$ as a second out-neighbor. We prove that $u$ gains only $y$ as a second out-neighbor. But this is exactly same as Case $iii.6$.\\

\textbf{\underline{Case $ 7 $:}} \textit{$f=v$}. It is clear that $v$ gains no new first out-neighbor. We prove that it gains no new  second out-neighbor. Suppose that $v\ra a\ra b\ra v$ in $T'$. Then $(v,a)\in E(D)$ and $a\notin K\cup \{x,v,u\}$. Since $ v \ra y \ra u$  in $D$, we may assume that $b  \notin \{u,v, y\}$ and $a \neq y$.\\

\textit{Subcase a:} If $(a,b)\in E(D)$ or is a convenient orientation w.r.t. $D$, then $b\in N^{++}_D(f)$.\\

\textit{Subcase b:} If $(a,b)\in E(D)-E(D')$ and it is not convenient w.r.t. $D$, then there is $rs \rightarrow ab $ in $\Delta$, namely $s \rightarrow b$ and $ a \notin N^{++}_D(s)$. So either $rs, ab \in E(C)$ or $ rs, ab \in E[Y_j,M_j]$ for some $j$. If $ab \in E(C)$, then $(a,b)=(x,y)$ or $(a,b)=(z,u)$  and hence $b \in \{u, y\}$, a contradiction. Thus $rs, ab \in E[Y_j,M_j]$ for some $j$.  Since $ a\notin K$, then $ a \in M_j$ and thus $ s \in M_j$. So by definition of $G$, $fs$ is not a missing edge of $D$. But $(f,a)\in  E(D)$ and $ a \notin N^{++}_D(s)$, then  by Lemma \ref{snlemma} we get $b\in N^{++}_D(f)$.\\

\textit{Subcase $c$}: If $(a,b)\in E(T)-E(D')$, then $b\in K$ and $(a,b)$ is a convenient orientation w.r.t. $D'$. Since $(f,a)\in  E(D)$ and so in $D'$, there exists $v'$ such that $f\ra v'\ra b$ in $D'$. Since $(v',b)\in E(D')$ and $b\in K$, then $v'\notin C \cup k$. Since $v'\notin C$ and $(f,v')\in E(D')$, then $(f,v')\in E(D)$. But this is already treated in Subcase $iv.7.a$ and Subcase $iv.7.b$.\\

\textbf{\underline{Case $ 8$:}} \textit{$f=x$}. Then $x$ gains only $y$ as a first out-neighbor and  gains $u$ as a second out-neighbor. We prove that it gains only $u$ as a new second out-neighbor.\\

\textit{Subcase a:} Suppose that $x\ra y\ra b\ra x$ in $T'$ with $b\neq u$. Then $ (y,b) \in E(T)$. Since $x\ra z$ and $v\ra y$ in $D$, then we may assume $b\notin C$ and hence $(y,b) \notin E(D')-E(D)$. If $ (y,b) \in E(D)$,  then $b$ is a whole vertex or  $ b \in S$. Thus by the losing relation $xy\ra zu$, we get $b\in N^{++}_D(x)$. Else if $ (y,b) \in E(T)-E(D')$, then  $b\in K$ and  $ (y,b)$ is a convenient orientation w.r.t. $D'$. So there is $v'$ such that $x\ra v'\ra b$ in $D'$. Since $(v',b)\in E(D')$ and $b\in K$, then $v'\notin C \cup K$. Since $v'\notin C$ and $(f,v')\in E(D')$, then $(f,v')\in E(D)$. As usual we can prove that $b\in N^{++}_D(x)$ in $D$. \\

\textit{Subcase b:} Suppose that $x\ra a\ra b\ra x$ in $T'$ with $a\neq y$ and $b\neq u$. Then $(x,a)\in D$ and  thus $a\notin K$. Since  $x\ra z$ in $D$ and $x\ra y$ in $T$, then $b\notin \{x,y,u,z\}$. We proceed exactly as in Case $iv.7$.\\

\textbf{\underline{Case $ 9$:}} \textit{$f=y$}. Clearly, $y$ gains no new  first out-neighbor. We prove that  it gains no new second out-neighbor. Suppose that $y\ra a\ra b\ra y$ in $T'$. Then $(y,a)\in E(D)$ and $a\notin K$. Since $y\ra u\ra x$ in $D$, then we may assume $b\notin \{x, y,u\}$. We continue  exactly as in  Case $iv.7$.\\

\textbf{\underline{Case $ 10$:}} \textit{$f=z$}. It is clear that $z$ gains no new vertex as a first out-neighbor. We prove that it gains no new vertex as a second out-neighbor. Suppose that $z\ra a\ra b\ra z$ in $T'$. Then $(z,a)\in E(D)$ and $a\notin K$. Since $x\ra z$ in $D$,  we may assume that  $a\notin K\cup \{x,z\}$.\\

\textit{Subcase a:} If $(a,b)\in E(D)$ or it is a convenient orientation w.r.t. $D$, then $b\in N^{++}_D(z)$.\\

\textit{Subcase b:} If $(a,b)\in E(D)-E(D')$ and it is not convenient w.r.t. $D$, then there is $rs \rightarrow ab $ in $\Delta$, namely $s \rightarrow b$ and $ a \notin N^{++}_D(s)$. So either $rs, ab \in E(C)$ or $ rs, ab \in E[Y_j,M_j]$ for some $j$. If $ab \in E(C)$, then $(a,b)=(x,y)$ or $(a,b)=(z,u)$  and hence $a \in \{x, z\}$, a contradiction. Thus $rs, ab \in E[Y_j,M_j]$ for some $j$.  Since $ a\notin K$, then $ a \in M_j$ and thus $ s \in M_j$. So by definition of $G$, $fs$ is not a missing edge of $D$. But $(f,a)\in  E(D)$ and $ a \notin N^{++}_D(s)$, then  by Lemma \ref{snlemma} we get $b\in N^{++}_D(f)$.\\

\textit{Subcase $c$}: If $(a,b)\in E(T)-E(D')$, then $b\in K$ and $(a,b)$ is a convenient orientation w.r.t. $D'$. Since $(f,a)\in  E(D)$ and so in $D'$, there exists $v'$ such that $f\ra v'\ra b$ in $D'$. Since $(v',b)\in E(D')$ and $b\in K$, then $v'\notin C \cup k$. Since $v'\notin C$ and $(f,v')\in E(D')$, then $(f,v')\in E(D)$. But this is already treated in Subcase $iv.10.a$ and Subcase $iv.10.b$.\\

\textbf{\underline{Case $  11$:}} \textit{$f\in V(G)-(Y\cup M \cup C) = A \cup (X-X_1)= A \cup (X-Y_1) $}.  Exactly same as Case $ i.6 $, with only one difference in Subcase $ b $. The difference is that in Subcase $iv.11.b$ there are two possibilities for the edges $rs, ab$: Either  $rs, ab \in E[Y_j, M_j]$ for some $j$,  or $rs, ab \in E(C)$. The first case is treated in Subcase $i.6.b$. However, for the second case, $f$ must belong to $A$ since otherwise $fa$ is a missing edge because $G[X \cup C]$  is a complete split graph, which contradicts the fact that $(f,a) \in E(D)$. Thus $fs$ is not missing edge of $D$ because $E[A, C]=\phi$ by definition of $G$. Since $f\ra a$ in $D$ and $a \notin N^{++}_D(s)$, then by Lemma \ref{snlemma} we get $b \in N^{++}_D(f)$.\\

Therefore, $f$ satisfies the SNP in $D$.\\

\noindent \textbf{\underline{Case $ v $:}} \textit{Suppose that $\Delta[E(C)]$ has exactly three arcs, say $uv\ra xy\ra zu\ra vx$.}\\

\noindent Then  $u\ra x\ra z\ra v\ra y $ in $D$ and $uv, yz$ are good missing edges. Assume without loss of generality that $(u,v)$ is a convenient orientation w.r.t. $D$. Add the arcs $(u,v)$, $(x,y)$, $(z,u)$ and $(v,x)$ to $D$. Assign to  $yz$ a convenient orientation and add it to $D$. The obtained oriented graph $D'$ is missing  $G'=G-(\cup E[Y_j, M_j] \cup E(C))$ which is a threshold graph.  So all the missing edges of $D'$ are good. We assign to them a convenient orientation and we add them to get a tournament $T$. Let $L$ be a local median order of $T$ and let $f$ denote its feed vertex. Reorient all the missing edges incident to $f$ towards $f$ except  those whose out-degree in $\Delta$ is not zero. The same order $L$ is again a local median order of the obtained tournament $T'$ and $f$ has the SNP in $T'$. We will prove that $f$  has the SNP in $D$. We have the following cases.\\

\textbf{\underline{Case $ 1 $:}} \textit{$f$ is a whole vertex.} This is the same as Case $i.1$.\\

\textbf{\underline{Case $ 2 $:}} \textit{$\exists$ $ 1\leq t\leq l$ such that $f\in M_t$.} Exactly same as Case $i.2$, with only one difference in the unsteady subcases, that is, in the subcases where $f \ra a \ra b$ with $(f,a) \in E(D)$ and $(a,b) \in E(D')-E(D)$ and it is not convenient w.r.t. $D$.  As usual, since $(a,b)$ is not convenient w.r.t. $D$, then there is  \lose. The difference is that in the unsteady subcases of Case $ v.2 $  either  $rs, ab \in E[Y_j, M_j]$ for some $j$ or $rs, ab \in E(C)$. If $rs, ab \in E[Y_j, M_j]$ for some $j$, we proceed exactly in the same way as in the unsteady subcases of Case $i.2$.  Else if $rs, ab \in E(C)$ (the possible cases are: $(r,s)= (u,v)$ and $ (a,b)=(x,y)$ or  $(r,s)= (x,y)$ and $ (a,b)=(z,u)$ or $(r,s)= (z,u)$ and $ (a,b)=(v,x)$), then $fs$ is not a missing edge because $E[M, C] = \phi$.  Since $f\ra a$ in $D$ and $a \notin N^{++}_D(s)$, then by Lemma \ref{snlemma} we get $b \in N^{++}_D(f)$.\\

\textbf{\underline{Case $ 3 $:}} \textit{$\exists$ $ 1\leq t\leq l$ such that $f\in Y_t$ and $M_t\neq \phi$}. Exactly same as Case $i.3$, with only one difference in the subcases when $f \ra a \ra b$ with $(f,a) \in E(D)$ and $(a,b) \in E(D')-E(D)$ and it is not convenient w.r.t. $D$. As usual, since $(a,b)$ is not convenient w.r.t. $D$, then there is  \lose. The difference is that in the unsteady subcases of Case $ v.3 $ there are two cases to be consider: Either  $rs, ab \in E[Y_j, M_j]$ for some $j$, or $rs, ab \in E(C)$. If $rs, ab \in E[Y_j, M_j]$ for some $j$, we proceed exactly in the same way as in the unsteady subcases of Case $i.3$.  Else if $rs, ab \in E(C)$, then $fa$ is  a missing edge because $G[Y \cup C]$ is a complete split graph, a contradiction. Thus this case does not exist.\\

\textbf{\underline{Case $ 4 $:}} \textit{$\exists$ $ 1\leq t\leq l+1$ such that $f\in Y_t$ such that $M_t=\phi$}. Exactly same as Case $ i.4 $, with only one difference in Subcase $ b $. The difference is that in Subcase $v.4.b$ there are two possibilities for the edges $rs, ab$: Either  $rs, ab \in E[Y_j, M_j]$ for some $j$,  or $rs, ab \in E(C)$. The first case is treated in Subcase $i.4.b$. However, the second case does not exist since otherwise  $fa$ is  a missing edge because $G[Y \cup C]$ is a complete split graph, which contradicts the fact that $(f,a) \in E(D)$. This means that  this case does not exist. \\

\textbf{\underline{Case $ 5 $:}} \textit{$f\in Y_{l+2}$}. Exactly same as Case $ i.4 $, with two differences in Subcase $ b $. The first difference is that in Subcase $v.5.b$ there are two possibilities for the edges $rs, ab$: Either  $rs, ab \in E[Y_j, M_j]$ for some $j$,  or $rs, ab \in E(C)$. The first case is already treated in Subcase $i.4.b$. However, the second case does not exist since otherwise  $fa$ is  a missing edge because $G[Y \cup C]$ is a complete split graph, which contradicts the fact that $(f,a) \in E(D)$. The second  difference is that   $fs$ is not a missing edge in Subcase $ v.5.b $, because $E[Y_{l+2}, M_j]=\phi$ by definition of $G$, while in Subcase $i.4.b $ we had to prove it.\\

\textbf{\underline{Case $ 6 $:}} \textit{$f=u$}. Exactly same as Case $iv.6$.\\

\textbf{\underline{Case $ 7 $:}} \textit{$f\in \{x,z\}$}. Similar to the case $f=u$, that is to Case $v.6$.\\

\textbf{\underline{Case $ 8$:}} \textit{$f=y$}.   Exactly same as Case $ iii.9 $ with only one difference in Subcase $ b $. The  difference is that in Subcase $v.8.b$ when  $ab \in E(C)$  then $(a,b)$ can be either $(x,y)$ or $(z,u)$ or $(v,x)$ and so $a \in \{x,z,v\}$, a contradiction.\\

\textbf{\underline{Case $ 9$:}} \textit{$f=v$}. Similar  to the case $f=y$, that is to Case $v.8$.\\

\textbf{\underline{Case $  10$:}} \textit{$f\in V(G)-(Y\cup M \cup C) = A \cup (X-X_1)= A \cup (X-Y_1) $}.  Exactly same as Case $ i.6 $, with only one difference in Subcase $ b $. The difference is that in Subcase $v.10.b$ there are two possibilities for the edges $rs, ab$: Either  $rs, ab \in E[Y_j, M_j]$ for some $j$,  or $rs, ab \in E(C)$. The first case is treated in Subcase $i.6.b$. However, for the second case, $f$ must belong to $A$ since otherwise $fa$ is a missing edge because $G[X \cup C]$  is a complete split graph, which contradicts the fact that $(f,a) \in E(D)$. Thus $fs$ is not missing edge of $D$ because $E[A, C]=\phi$ by definition of $G$. Since $f\ra a$ in $D$ and $a \notin N^{++}_D(s)$, then by Lemma \ref{snlemma} we get $b \in N^{++}_D(f)$.\\

Therefore, in view of all above observations, $f$ satisfies the SNP in $D$.\\

\noindent \textbf{\underline{Case $ vi $:}} \textit{Suppose that $\Delta[E(C)]$ has exactly  three arcs, say $uv\ra xy\ra zu$ and  $xv\ra zy$}.\\

\noindent Then $u\ra x\ra z\ra v\ra y\ra u$ in $D$ and $uv$ and $xv$ are good missing edges. Assume without loss of generality that $(u,v)$ is a convenient orientation  of $uv$. Add $(u,v)$, $(x,y)$ and $(z,u)$ to $D$. If $(x,v)$ is a convenient orientation, then add $(x,v)$ and $(z,y)$ to $D$, otherwise add $(v,x)$ and $(y,z)$ to $D$.  The obtained oriented graph $D'$ is missing  $G'=G-(\cup E[Y_j, M_j] \cup E(C))$ which is a threshold graph.  So all the missing edges of $D'$ are good. We assign to them a convenient orientation and we add them to get a tournament $T$. Let $L$ be a local median order of $T$ and let $f$ denote its feed vertex. Reorient all the missing edges incident to $f$ towards $f$ except  those whose out-degree in $\Delta$ is not zero. The same order $L$ is again a local median order of the obtained tournament $T'$ and $f$ has the SNP in $T'$. We will prove that $f$  has the SNP in $D$. We have the following cases.\\

\textbf{\underline{Case $ 1 $:}} \textit{$f$ is a whole vertex.} This is the same as Case $i.1$.\\

\textbf{\underline{Case $ 2 $:}} \textit{$\exists$ $ 1\leq t\leq l$ such that $f\in M_t$.} Exactly same as Case $i.2$, with only one difference in the unsteady subcases, that is, in the subcases where $f \ra a \ra b$ with $(f,a) \in E(D)$ and $(a,b) \in E(D')-E(D)$ and it is not convenient w.r.t. $D$.  As usual, since $(a,b)$ is not convenient w.r.t. $D$, then there is  \lose. The difference is that in the unsteady subcases of Case $ vi.2 $  either  $rs, ab \in E[Y_j, M_j]$ for some $j$ or $rs, ab \in E(C)$. If $rs, ab \in E[Y_j, M_j]$ for some $j$, we proceed exactly in the same way as in the unsteady subcases of Case $i.2$.  Else if $rs, ab \in E(C)$ (the possible cases are: $(r,s)= (u,v)$ and $ (a,b)=(x,y)$, $(r,s)= (x,y)$ and $ (a,b)=(z,u)$, $(r,s)= (v,x)$ and $ (a,b)=(y,z)$ if $ (v,x) $ is a convenient orientation of $vx$ or  $(r,s)= (x,v)$ and $(a,b)=(z,y)$ if $ (x,v) $ is a convenient orientation of $vx$), then $fs$ is not a missing edge because $E[M, C] = \phi$.  Since $f\ra a$ in $D$ and $a \notin N^{++}_D(s)$, then by Lemma \ref{snlemma} we get $b \in N^{++}_D(f)$.\\

\textbf{\underline{Case $ 3 $:}} \textit{$\exists$ $ 1\leq t\leq l$ such that $f\in Y_t$ and $M_t\neq \phi$}. Exactly same as Case $i.3$, with only one difference in the subcases when $f \ra a \ra b$ with $(f,a) \in E(D)$ and $(a,b) \in E(D')-E(D)$ and it is not convenient w.r.t. $D$. As usual, since $(a,b)$ is not convenient w.r.t. $D$, then there is  \lose. The difference is that in the unsteady subcases of Case $ vi.3 $ there are two cases to be consider: Either  $rs, ab \in E[Y_j, M_j]$ for some $j$, or $rs, ab \in E(C)$. If $rs, ab \in E[Y_j, M_j]$ for some $j$, we proceed exactly in the same way as in the unsteady subcases of Case $i.3$.  Else if $rs, ab \in E(C)$, then $fa$ is  a missing edge because $G[Y \cup C]$ is a complete split graph, a contradiction. Thus this case does not exist.\\

\textbf{\underline{Case $ 4 $:}} \textit{$\exists$ $ 1\leq t\leq l+1$ such that $f\in Y_t$ such that $M_t=\phi$}. Exactly same as Case $ i.4 $, with only one difference in Subcase $ b $. The difference is that in Subcase $vi.4.b$ there are two possibilities for the edges $rs, ab$: Either  $rs, ab \in E[Y_j, M_j]$ for some $j$,  or $rs, ab \in E(C)$. The first case is treated in Subcase $i.4.b$. However, the second case does not exist since otherwise  $fa$ is  a missing edge because $G[Y \cup C]$ is a complete split graph, which contradicts the fact that $(f,a) \in E(D)$. This means that  this case does not exist. \\

\textbf{\underline{Case $ 5 $:}} \textit{$f\in Y_{l+2}$}. Exactly same as Case $ i.4 $, with two differences in Subcase $ b $. The first difference is that in Subcase $vi.5.b$ there are two possibilities for the edges $rs, ab$: Either  $rs, ab \in E[Y_j, M_j]$ for some $j$,  or $rs, ab \in E(C)$. The first case is already treated in Subcase $i.4.b$. However, the second case does not exist since otherwise  $fa$ is  a missing edge because $G[Y \cup C]$ is a complete split graph, which contradicts the fact that $(f,a) \in E(D)$. The second  difference is that   $fs$ is not a missing edge in Subcase $ vi.5.b $, because $E[Y_{l+2}, M_j]=\phi$ by definition of $G$, while in Subcase $ i.4.b $ we had to prove it.\\

\textbf{\underline{Case $ 6$:}} \textit{$f=u$}.  Exactly same as Case $iv.6$.\\

\textbf{\underline{Case $ 7$:}} \textit{$f=y$}. Exactly same as Case $ iii.9 $ with only one difference in Subcase $ b $. The  difference is that in Subcase $vi.7.b$ if  $ab \in E(C)$  then $(a,b)$ can be either $(x,y)$,  $(z,u)$  $(y,z)$ or $(z,y)$ and so $a \in \{x,y,z\}$, a contradiction.\\

\textbf{\underline{Case $ 8$:}} \textit{$f=z$}. Exactly same as Case $vi.7$ with difference that $yz$ and $uz$ are  reoriented so that $(y,z)\in E(T')$ and $(u,z) \in E(T')$, respectively.\\

\textbf{\underline{Case $ 9$:}} \textit{$f=v$}. Exactly same as Case $iii.7$, with only one difference: In Subcase $vi.9.2.b$ if  $ab \in E(C)$  then $(a,b)$ can be either $(x,y)$,  $(z,u)$  or $(z,y)$ and so $a \in \{x,z\}$, a contradiction because $z \ra v$ in $D$ and $x \ra v $ in $D'$ while $v \ra a$ in $D$.\\

\textbf{\underline{Case $ 10$:}} \textit{$f=x$}. Here we consider two main cases: \\

\textit{\underline{Case $ 10.1$:} Assume  $(x,v)\in E(D')$.} Clearly, $x$ gains only $v$ and $y$ as new first out-neighbors. However, $x$ loses $v$ as a second out-neighbor and gains $u$ as a new second out-neighbor. We prove that $x$ gains only $u$ as a new second out-neighbor.\\

\textit{Subcase a:} Suppose that $x\ra a \ra b \ra x $ in $T'$, with $a\neq y$, $a\neq v$ and $b\neq u$. Then $(a,b ) \in E(T)$, $(x,a)\in E(D)$  and thus $a\notin K$. Since $x\ra z\ra v$ in $D$ and $x\ra y$ in $T$, then we may assume that $b\notin C$.\\

\textit{Subcase $a.1$}: If $(a,b)\in E(D)$, $b\in N^{++}_D(x)$.\\

\textit{Subcase $a.2$}: If $(a,b)\in E(D')-E(D)$, then either $ab \in E(C)$ or $ ab \in E[Y_j,M_j]$ for some $j$. If $ab \in E(C)$, then $b \in C$, a contradiction. Thus $ ab \in E[Y_j,M_j]$ for some $j$. It follows that either $(a,b)$ is a convenient orientation w.r.t. $D$ and hence $b\in N^{++}_D(x)$ or there is $rs \rightarrow ab $ in $\Delta$, namely $s \rightarrow b$ and $ a \notin N^{++}_D(s)$. Since $ a\notin K$, then $ a \in M_j$ and thus $ s \in M_j$. So by definition of $G$, $xs$ is not a missing edge of $D$. But $(x,a)\in  E(D)$ and $ a \notin N^{++}_D(s)$, then  by Lemma \ref{snlemma} we get $b\in N^{++}_D(x)$.\\

\textit{Subcase $a.3$}: If $(a,b)\in E(T)-E(D')$, then $b\in K$ and $(a,b)$ is a convenient orientation w.r.t. $D'$. Since $x \ra a$ in  $D$ and so in $D'$, there exists $v'$ such that $x\ra v'\ra b$ in $D'$. Since $(v',b)\in E(D')$ and $b\in K$, then $v'\notin C$. Since $v'\notin C$ and $(x,v')\in E(D')$, then $(x,v')\in E(D)$. But this is already treated in Subcase $vi.10.1.a.1$ and Subcase $vi.10.1.a.2$.\\

\textit{Subcase $ b $:} Suppose that $x\ra y\ra b\ra x$ in $T'$, with $b\neq u$. Since $x\ra z\ra v$ in $D$, then we may assume that $b\notin C$ and hence $(y,b) \notin E(D')-E(D)$.\\

\textit{Subcase $b.1$}: If $(y,b)\in E(D)$, then either  $b \in S$ or $b$ is a whole vertex. Then by the losing relation $xy \ra zu$, we get $b\in N^{++}_D(x)$.\\

\textit{Subcase $b.2$}: If $(y,b)\in E(T)-E(D')$, then $b\in K$ and $(y,b)$ is a convenient orientation w.r.t. $D'$. But this is exactly the same as Subcase $vi.10.1.a.1$ and Subcase $vi.10.1.a.2$.\\

\textit{Subcase $ c $:} Suppose that $x\ra v\ra b\ra x$ in $T'$, with $b\neq u$. Since $x\ra z\ra v$ in $D$ and $x\ra y$ in $T$, then we may assume that $b\notin C$.  We proceed similarly to  Subcase $vi.10.1.b$ by replacing the losing relation $xy \ra zu$ in Subcase $vi.10.1.b$ by the losing relation $xv \ra zy$ in Subcase $vi.10.1.c.1$. \\

\textit{\underline{Case $ 10.2$:} Assume  $(v,x)\in E(D')$.}  Clearly, $x$ gains only $y$ as a new first out-neighbor and $u$ as a new second out-neighbor. We prove it gains only $u$ as a new second out-neighbor.\\

\textit{Subcase a:} Suppose that $x\ra y\ra b\ra x$ in $T'$ with $b\neq u$. Then $ (y,b) \in E(T)$. Since $x\ra z \ra V$ in $D$, then we may assume $b\notin C$ and hence $(y,b) \notin E(D')-E(D)$. This is exactly as Subcase $vi.10.1.b$.	 \\

\textit{Subcase b:} Suppose that $x\ra a\ra b\ra x$ in $T'$ with $a\neq y$ and $b\neq u$. Then $(a,b ) \in E(T)$, $(x,a)\in E(D)$ and  thus $a\notin K$. Since  $x\ra z \ra v$ in $D$ and $x\ra y$ in $T$, then $b\notin C$. We proceed exactly as in Case $vi.10.1.a$.\\

\textbf{\underline{Case $  11$:}} \textit{$f\in V(G)-(Y\cup M \cup C) = A \cup (X-X_1)= A \cup (X-Y_1) $}.  Exactly same as Case $ i.6 $, with only one difference in Subcase $ b $. The difference is that in Subcase $vi.11.b$ there are two possibilities for the edges $rs, ab$: Either  $rs, ab \in E[Y_j, M_j]$ for some $j$,  or $rs, ab \in E(C)$. The first case is treated in Subcase $i.6.b$. However, for the second case, $f$ must belong to $A$ since otherwise $fa$ is a missing edge because $G[X \cup C]$  is a complete split graph, which contradicts the fact that $(f,a) \in E(D)$. Thus $fs$ is not missing edge of $D$ because $E[A, C]=\phi$ by definition of $G$. Since $f\ra a$ in $D$ and $a \notin N^{++}_D(s)$, then by Lemma \ref{snlemma} we get $b \in N^{++}_D(f)$.\\

Therefore,  all what precede prove that $f$ has the SNP in $D$. This completes the proof.

\end{proof}

As immediate consequences of the previous theorem, we may conclude the following:

\begin{corollary}
	Every oriented graph missing a generalized comb satisfies the SNC.
\end{corollary}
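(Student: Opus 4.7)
My plan is to reduce this corollary directly to the main theorem just proved. The key observation is that the class of generalized combs sits strictly inside the class of $\{C_4, \overline{C_4}, S_3, \text{chair}, \overline{\text{chair}}\}$-free graphs to which the main theorem applies. Specifically, by Theorem~\ref{generalisedcom}, a graph $G$ is a generalized comb if and only if $C_4$, $\overline{C_4}$, $C_5$, $S_3$, chair and co-chair are all forbidden induced subgraphs of $G$. Dropping the $C_5$ condition from this list, every generalized comb is in particular a $\{C_4, \overline{C_4}, S_3, \text{chair}, \overline{\text{chair}}\}$-free graph.

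Consequently, given any oriented graph $D$ missing a generalized comb $G$, the graph $G$ automatically satisfies the hypothesis of the main theorem, and the conclusion that $D$ satisfies the SNC follows immediately. Equivalently, one can phrase the reduction through Corollary~\ref{ourgraph}: in the decomposition $V(G) = S \cup K \cup C$ provided there, the fact that $G$ is a generalized comb forces $G[C]$ to be empty (since a generalized comb is $C_5$-free), so the resulting decomposition degenerates to $G = G[S \cup K]$, which is precisely an $\{S,K\}$-generalized comb.

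I expect no real obstacle in this argument. The deduction is a one-line invocation of the main theorem, and it corresponds exactly to Case~$i$ in that proof (the case in which $G[C]$ is empty, so that the dependency digraph restricted to $E(C)$ is empty). The entire technical difficulty has already been absorbed in proving the main theorem; this corollary merely records the interesting special case of generalized combs, which subsumes the earlier results of Ghazal on threshold graphs and combs.
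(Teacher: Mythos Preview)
Your argument is correct and is precisely the intended one: the paper lists this corollary as an ``immediate consequence'' of the main theorem, and your observation via Theorem~\ref{generalisedcom} that every generalized comb is $\{C_4,\overline{C_4},S_3,\text{chair},\overline{\text{chair}}\}$-free is exactly the one-line deduction the paper has in mind.
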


\begin{corollary} (Ghazal \cite{contrib} )
	Every oriented graph missing a comb satisfies the  SNC.
\end{corollary}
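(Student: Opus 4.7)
The plan is to deduce this corollary immediately from the preceding corollary, which states that every oriented graph missing a generalized comb satisfies the SNC. The only thing to verify is that every comb, in the sense of \cite{contrib}, is a particular instance of a generalized comb in the sense of Definition \ref{gcdef}.

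To carry this out, I would take a comb $G$ and explicitly exhibit the decomposition of $V(G)$ required by Definition \ref{gcdef}. A comb (as used in \cite{contrib}) consists of a clique $K$ (the spine) together with, for each vertex of $K$, an optional pendant stable neighbor (a tooth). I would identify the spine with $Y = \bigcup_{i=1}^{l+2} Y_i$, taking each nonempty $Y_i$ to be a singleton vertex of the spine, and set $X = X_1 = Y_1$ so that $K = X \cup Y$ is the entire spine clique; the $A_i$'s are taken empty (so $S = M$), and each $M_i$ is either empty or a singleton matched to $Y_i$ so that $G[Y_i \cup M_i]$ is a perfect split graph as required by condition (6). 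With $X_i$ for $i \geq 2$ also empty, the remaining split conditions (4), (5), (7) of Definition \ref{gcdef} hold vacuously or trivially, and condition (9) follows because the only edges of a comb are the spine edges and the pendant edges, which are exactly the edges provided by the subgraphs listed in Definition \ref{gcdef}.

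Once the comb has been recognized as a generalized comb, no further work is needed. Any oriented graph $D$ missing a comb is in particular an oriented graph missing a generalized comb, so the preceding corollary applies and $D$ satisfies the SNC. Equivalently, one could invoke Theorem \ref{generalisedcom} to observe that $G$ is $\{C_4, \overline{C_4}, C_5, S_3, \text{chair}, \overline{\text{chair}}\}$-free and hence falls under the hypotheses of the main theorem.

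There is no real obstacle here: the only step with any content is the bookkeeping of matching a comb's spine-and-teeth description against the nine-condition definition of a generalized comb, and this is a routine identification of sets. All the hard work — the construction of the oriented graph $D'$ and tournament $T$, the choice of convenient orientations for good missing edges, the case analysis for the feed vertex of a local median order, and the verification via Lemma \ref{snlemma} that the feed vertex retains the SNP upon reorientation — has already been discharged in the proof of the main theorem.
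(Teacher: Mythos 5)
Your overall strategy --- reduce the corollary to the preceding one by checking that a comb is a generalized comb --- is exactly what the paper intends; the paper offers no separate argument and treats this as immediate from the main theorem. The gap is in your explicit decomposition. If you make each $Y_i$ a singleton spine vertex and each $M_i$ the (at most one) pendant tooth matched to it, then condition (7) of Definition \ref{gcdef} is neither vacuous nor trivial: for $1 \le i < j \le l+1$ it demands that $G[Y_j \cup M_i]$ be a \emph{complete} split graph, i.e.\ that the tooth in $M_i$ be adjacent to the spine vertex in $Y_j$. A pendant tooth is adjacent to exactly one spine vertex, so this fails as soon as there is a tooth in some $M_i$ and a later nonempty $Y_j$ --- in particular whenever the comb has at least two teeth. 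Condition (8) compounds the problem: among the $Y_i$'s only $Y_{l+1}$ and $Y_{l+2}$ may be empty, so you cannot simply discard the offending blocks. In a generalized comb a vertex of $M_i$ must see its match in $Y_i$ together with \emph{all} of $Y_{i+1},\dots,Y_{l+1}$, so under a one-tooth-per-block assignment a pendant tooth could only sit in $M_l$, forcing at most one tooth in total.

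The repair is short and worth recording. Take $n=0$ and $l=1$; put all teeth into $M_1$ and all matched spine vertices into $Y_1=X_1$, so that $G[Y_1\cup M_1]$ is a perfect split graph as condition (6) requires; put the unmatched spine vertices into $Y_{l+2}=Y_3$; and set $Y_2=A_0=\emptyset$. Then conditions (4), (5) and (7) are genuinely vacuous, and (9) holds because the only edges of a comb are the spine edges and the matching edges. Alternatively, your fallback via Theorem \ref{generalisedcom} is sound: a clique with a pendant matching attached contains no induced $C_4$, $\overline{C_4}$, $C_5$, $S_3$, chair or co-chair, hence is a generalized comb, and the preceding corollary applies.
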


\begin{corollary} (Ghazal \cite{a})\label{SNCgs}
	Every oriented graph missing a threshold graph satisfies the SNC.\\
\end{corollary}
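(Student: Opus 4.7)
The plan is to derive this corollary immediately from the main theorem (the preceding one), by recognizing that the class of threshold graphs is contained in the class of graphs addressed by that theorem. The main theorem asserts that every oriented graph whose missing graph is $\{C_4, \overline{C_4}, S_3, \text{chair}, \overline{\text{chair}}\}$-free satisfies the SNC, so it suffices to show that every threshold graph belongs to this forbidden-subgraph class.

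The first step is to invoke Theorem \ref{thre} of Hammer and Chv\`{a}tal, which characterizes threshold graphs as precisely the graphs having no induced $C_4$, $\overline{C_4}$, or $P_4$. Thus a threshold graph $G$ already has $C_4$ and $\overline{C_4}$ as forbidden subgraphs. It remains to check that $G$ also forbids $S_3$, the chair, and the co-chair. This is done by exhibiting an induced $P_4$ inside each of these three graphs: the chair, read along its spine, contains the induced path $xyzt$; the co-chair, being the complement of the chair, contains the induced path obtained by the non-edges of the chair (for instance $z, x, t, y$ form an induced $P_4$ in the co-chair); and $S_3$ likewise contains an induced $P_4$ by inspection of Figure \ref{s1}. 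Consequently, any graph containing $S_3$, the chair, or the co-chair as an induced subgraph also contains an induced $P_4$, and so by the Hammer--Chv\`{a}tal characterization cannot be a threshold graph. Equivalently, a threshold graph forbids all three.

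An even shorter route, which I would prefer to present, is to cite the Corollary already in the excerpt stating that every threshold graph is a generalized comb, together with Theorem \ref{generalisedcom}, which asserts that generalized combs are exactly the graphs forbidding $C_4$, $\overline{C_4}$, $C_5$, $S_3$, chair, and co-chair. Either way, one concludes that every threshold graph is $\{C_4, \overline{C_4}, S_3, \text{chair}, \overline{\text{chair}}\}$-free, so applying the main theorem to any oriented graph $D$ whose missing graph is a threshold graph yields a vertex with the SNP in $D$. There is essentially no obstacle here; the only substantive content is the inclusion of graph classes, and both verifications above are routine checks of induced subgraph structure.
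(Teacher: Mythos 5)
Your proposal is correct and is essentially the paper's own (implicit) argument: the paper presents this corollary as an immediate consequence of the main theorem, relying on the chain that every threshold graph is a generalized comb and hence, by Theorem \ref{generalisedcom}, forbids $C_4$, $\overline{C_4}$, $S_3$, the chair and the co-chair, which is exactly your preferred shorter route. Your alternative direct verification via the Hammer--Chv\`{a}tal $P_4$-characterization is also sound but adds nothing beyond the class inclusion.
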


Since  threshold graphs, $C_5$, generalized combs and $\{C_{4}, \overline{C_{4}}, S_{3},$ chair and co-chair$\}$-free graphs are in $ \mathcal{F}(\vec{\mathcal{P}})$ and any   oriented graph missing one of the  graphs  mentioned before satisfies the SNC, we end this article by wondering the following:

\begin{problem}
	Does every oriented graph missing a graph in $\mathcal{F}(\vec{\mathcal{P}})$ satisfies SNC?
\end{problem}

\newpage
	
\end{document}